\newcommand{\R}{\mathbb{R}}
\newcommand{\floor}[1]{\left\lfloor {#1} \right\rfloor}
\newcommand{\ceil}[1]{\left\lceil {#1} \right\rceil}
\newtheorem{thm}{Theorem}[section]
\newtheorem{lem}{Lemma}[section]
\DeclareMathOperator{\HJ}{HJ}
\DeclareMathOperator{\TTT}{TTT}
\DeclareMathOperator{\Gr}{Graham}
\newcommand{\img}[1]{\begingroup
\setbox0=\hbox{\includegraphics{#1}}%
\parbox{\wd0}{\box0}\endgroup}
\title{Graham's number is less than $2\uparrow \uparrow \uparrow 6$}
\author{Mikhail Lavrov$^1$}
\address{$^1$Carnegie Mellon University, Mathematics Department}
\author{Mitchell Lee$^2$}
\address{$^2$Massachusetts Institute of Technology}
\author{John Mackey$^3$}
\address{$^3$Carnegie Mellon University, Mathematics Department}
\email{mlavrov@andrew.cmu.edu, mitchlee@mit.edu, jmackey@andrew.cmu.edu}
\begin{document}

\begin{abstract}
In \cite{graham1971ramsey}, Graham and Rothschild consider a geometric Ramsey problem: finding the least $n$ such that if all edges of the complete graph on the points $\{\pm1\}^n$ are $2$-colored, there exist $4$ coplanar points such that the $6$ edges between them are monochromatic. They give an explicit upper bound: $n \le F(F(F(F(F(F(F(12)))))))$, where $F(m) = 2 \uparrow^{m} 3$, an extremely fast-growing function. We bound $n$ between two instances of a variant of the Hales-Jewett problem, obtaining an upper bound which is between $F(4)$ and $F(5)$.
\end{abstract}

\maketitle

\section{Introduction}

\subsection{Preliminaries}

Let $A$ be a finite set with at least $2$ elements, and $A^n$ the set of $n$-tuples of elements of $A$. Fix a group $G$ which acts on $A$. We define a $k$-parameter subset of $A^n$ to be the image of an injection $f: A^k \to A^n$ which has a specific form: for all $1 \le i \le n$, either
\begin{enumerate}
\item $f_i(x_1, \dots, x_k) = a$ for some $a \in A$, or
\item $f_i(x_1, \dots, x_k) = \sigma(x_j)$ for some $1 \le j \le k$ and some $\sigma \in G$.
\end{enumerate}
The requirement that $f$ is an injection is equivalent to asking that for all $1 \le j \le k$, there exist $i, \sigma$ such that $f_i(x_1, \dots, x_k) = \sigma(x_j)$. Notably, if $f$ defines a $t$-parameter subset of $A^n$, and $g$ defines a $k$-parameter subset of $A^t$, then $f \circ g$ defines a $k$-parameter subset of $A^n$.

The $n$-parameter sets were introduced by Graham and Rothschild \cite{graham1971ramsey}, who proved the following result:

\begin{thm}[Graham-Rothschild Parameter Sets Theorem]
\label{thm:g-r}
Pick an alphabet $A$, a group $G$ acting on $A$, and integers $0 \le k \le t$ and $c \ge 2$. Then there exists a $N$ such that for all $n \ge N$, if the $k$-parameter subsets of $A^n$ are colored one of $c$ colors, then some $t$-parameter subset of $A^n$ can be found, all of whose $k$-parameter subsets receive the same color.
\end{thm}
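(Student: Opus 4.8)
The plan is to prove Theorem~\ref{thm:g-r} by induction on $k$, strengthening the statement so that the inductive hypothesis is available for an arbitrary group, an arbitrary number of colors, and every $t \ge k$. Throughout I will use the composability of parameter sets noted above: a $k$-parameter subset of a $t$-parameter subset of $A^n$ is again a $k$-parameter subset of $A^n$, and every $k$-parameter subset of $A^n$ that factors through a fixed $t$-parameter subset $T$ arises this way. Hence ``all $k$-parameter subsets of $T$ receive the same color'' is a condition on the induced coloring $g \mapsto \chi(f \circ g)$ of the $k$-parameter subsets of $A^t$, where $f: A^t \to A^n$ defines $T$ and $\chi$ is the given coloring.

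\emph{Base case $k = 0$.} A $0$-parameter subset is a single point, so here we are $c$-coloring $A^n$ and seeking a monochromatic $t$-parameter subset. Since points carry no internal structure, the group $G$ plays no role at this stage, and it is enough to produce a monochromatic combinatorial $t$-parameter subspace (trivial $G$). For $t = 1$ this is precisely the Hales-Jewett theorem over the alphabet $A$ with $c$ colors, which I would invoke as known (or establish via Shelah's argument). To go from $t = 1$ to general $t$ I would iterate: having obtained a $(t-1)$-parameter subspace, recolor each of its $1$-parameter extensions by the entire Hales-Jewett color pattern that it induces, and apply Hales-Jewett once more with this enlarged palette, peeling off one additional parameter at a time. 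This yields an explicit bound $N_0 = N_0(A,t,c)$.

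\emph{Inductive step $k - 1 \Rightarrow k$.} Fix a $c$-coloring $\chi$ of the $k$-parameter subsets of $A^n$, with $n$ to be chosen large. The idea is to record, for each $(k-1)$-parameter subset $h$ of a suitable lower block of coordinates, the full list of $\chi$-colors of all $k$-parameter subsets obtained by adjoining one more parameter to $h$ inside a prescribed ``reservoir'' formed by the remaining coordinates; this is a coloring of the $(k-1)$-parameter subsets of the block with a finite, though vastly larger, palette. Applying the induction hypothesis to this derived coloring yields a $t'$-parameter subset $T'$ of the block (with $t'$ chosen generously in terms of $t$, $k$, and $c$) along which the derived coloring is constant, so that the $\chi$-colors of one-parameter extensions no longer depend on the $(k-1)$-parameter ``base.'' It then remains to control how $\chi$ depends on the adjoined parameter alone, a problem about $1$-parameter families that I would dispatch with a further, smaller application of the induction hypothesis together with the base case, finally cutting $T'$ down to the required $t$-parameter subset. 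Chaining the two stages gives a bound $N_k$ that is essentially a tower of exponentials in $N_{k-1}$, which is exactly where the enormous constants in Graham and Rothschild's original estimate originate.

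\emph{The main obstacle.} The delicate step is the amalgamation inside the inductive step. The derived coloring directly controls only $k$-parameter subsets of a special ``split'' shape --- a $(k-1)$-parameter base in the block together with one fresh parameter in the reservoir --- whereas the conclusion requires that \emph{every} $k$-parameter subset of the final $t$-parameter subset be monochromatic, including those whose $k$ parameters are spread across all of the coordinates at once. Arranging the reservoir bookkeeping so that monochromaticity on the split subsets propagates to all $k$-parameter subsets, while keeping the number of parameters and the number of colors finite at each stage, is the technical heart of the argument.
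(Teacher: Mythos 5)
First, note that the paper does not prove Theorem~\ref{thm:g-r} at all: it is stated as a known result, cited from \cite{graham1971ramsey} and used as a black box, so the only benchmark is the original Graham--Rothschild argument (or later treatments such as Shelah's). Measured against that, your base case is sound: since the identity lies in $G$, a monochromatic combinatorial $t$-space is in particular a monochromatic $t$-parameter subset, so $k=0$ reduces to the multidimensional Hales--Jewett theorem, which follows from the one-dimensional case by the standard iteration you sketch.

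The inductive step, however, is not a proof but a description of where a proof would have to go. The derived coloring you introduce controls only $k$-parameter subsets of the special split form (a $(k-1)$-parameter base in the block plus one fresh parameter confined to the reservoir), and you yourself flag the passage from monochromaticity of these split sets to monochromaticity of \emph{all} $k$-parameter subsets of some $t$-parameter subset as ``the technical heart'' without supplying it. That amalgamation is precisely the content of the Graham--Rothschild theorem---it is where the double induction (on $k$, and for fixed $k$ on $t$) and the accompanying bookkeeping in \cite{graham1971ramsey} live---so the proposal has a genuine gap at the decisive step: the crucial lemma is asserted rather than proved. Two subsidiary points would also need care even to set the induction up correctly. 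The palette of the derived coloring is finite only if the reservoir's size is fixed before the block is chosen, which forces a particular order of quantifiers that you have not pinned down; and in a general $k$-parameter subset the adjoined parameter may occupy block coordinates as well as reservoir coordinates and may enter each coordinate with any twist $\sigma \in G$, so the group action has to be threaded through the reservoir bookkeeping rather than appearing only in the base case. As written, this is a plausible outline, not a proof.
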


For the remainder of this paper, we will only consider $2$-colorings ($c=2$) and, when necessary, we will call these two colors ``red" and ``blue".

There are several special cases of Theorem~\ref{thm:g-r} which are of interest.

\subsubsection{Graham's number}

Take $A = \{\pm1\}$, and let $G$ be the group of both permutations of $A$: $\{x \mapsto x, x \mapsto -x\}$. Then any two points in $\{\pm1\}^n$ form a $1$-parameter set. More generally, a $d$-parameter set consists of $2^d$ points that lie on a $d$-dimensional affine subspace of $\R^n$ (if we include $\{\pm1\}^n \subset \R^n$ in the natural way). We will also call this a $d$-dimensional subcube of $\{\pm1\}^n$.

An edge-coloring of $\{\pm1\}^n$ is a $2$-coloring of the edges of the complete graph on the $2^n$ points of $\{\pm1\}^n$: a coloring of the $1$-parameter subsets of $\{\pm1\}^n$. Let $\Gr(d)$ be the smallest dimension $n$ such that every edge-coloring of the $n$-dimensional cube contains a monochromatic $d$-dimensional subcube. Then Theorem~\ref{thm:g-r} implies that $\Gr(d)$ exists and is finite for all $d$.

In particular, $\Gr(2)$ is the smallest dimension $n$ such that every edge-coloring of $\{\pm1\}^n$ contains a monochromatic planar $K_4$: a set of $4$ coplanar points in $\{\pm1\}^n$ such that all $6$ edges between them are the same color. An incredibly large upper bound on $\Gr(2)$ was popularized as ``Graham's number" by Martin Gardner \cite{gardner2001colossal}.

\subsubsection{The Hales-Jewett number}

Take $A = [t] := \{1, 2, \dots, t\}$, let $G = \{e\}$, and color the $0$-parameter sets of $[t]^n$, which is equivalent to coloring elements of $[t]^n$. A $1$-parameter set in $[t]^n$ is called a combinatorial line, and a $d$-parameter set is called a $d$-dimensional combinatorial space.

The Hales-Jewett number $\HJ(t,k)$ be the least dimension $n$ such that every $k$-coloring of $[t]^n$ contains a monochromatic combinatorial line. More generally, $\HJ(t,k,d)$ is the least dimension $n$ such that every $k$-coloring of $[t]^n$ contains a monochromatic $d$-dimensional combinatorial space.

The Hales-Jewett number is probably the most well-studied of the three problems. In \cite{shelah1988primitive}, Shelah proved a primitive recursive upper bound on $\HJ(t, k, d)$. 

\subsubsection{The tic-tac-toe number}

Again consider coloring the elements of $[t]^n$, but this time allow a wider variety of $d$-parameter subsets: let $G = \{e, \pi\}$, which acts on $[t]$ by $e(x) = x$ and $\pi(x) = t+1-x$. A $1$-parameter subset using this $G$ is called a tic-tac-toe line, and a $d$-parameter subset a $d$-dimensional tic-tac-toe space. We define the tic-tac-toe numbers $\TTT(t,k)$ and $\TTT(t,k,d)$ analogously to the Hales-Jewett numbers.

Since the same set of points is colored, but more subsets are acceptable, it's clear that $\TTT(t,k,d) \le \HJ(t,k,d)$. Furthermore, it is easily shown that for all $t$, $k$, and $d$, $\HJ(\ceil{t/2}, k, d) \le \TTT(t,k,d)$, so the overall behavior of the tic-tac-toe numbers and Hales-Jewett numbers are similar. However, for small values of $t$ (and we will only consider the case $t=4$) the behavior of these two bounds is potentially quite different, and it is therefore worthwhile to state our results in terms of the tic-tac-toe number instead.

\subsection{Previous results on $\Gr(d)$}

In \cite{graham1971ramsey}, Graham and Rothschild observed that $\Gr(2) \ge 6$, and conjectured that $\Gr(2) < 10$. This conjecture has been proven false, but not by much: the lower bound was later improved to $11$ by Exoo \cite{exoo2003euclidean} and then to $13$ by by Barkley \cite{barkley2008improved}.

The upper bound is a more complicated story. Although most sources list the bound from \cite{graham1971ramsey} as the best upper bound known, this is not the case. In \cite{shelah1988primitive}, Shelah proved a bound on Theorem~\ref{thm:g-r} for $G = \{e\}$, as well as a similar proof of the affine Ramsey theorem. These can be used to obtain primitive recursive bounds on $\Gr(d)$; for example, Theorem 5.1 in \cite{carlson2006infinitary} can be used to prove a bound on $\Gr(2)$ which uses $17$ iterated applications of the Hales-Jewett number.
 
\subsection{New results}

Our primary results are the following improved bounds on $\Gr(d)$ and especially $\Gr(2)$:

\begin{thm}$\TTT(4,2,d) + 1 \le \Gr(d+1)$.
\label{thm:lower}
\end{thm}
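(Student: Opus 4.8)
The plan is to prove the (equivalent) inequality $\Gr(d+1) > \TTT(4,2,d)$; we may assume $d\ge 1$, the case $d=0$ being trivial since $\{\pm1\}^0$ has no edges. Set $N=\TTT(4,2,d)-1$. By definition of the tic-tac-toe number there is a $2$-coloring $c$ of $[4]^N$ with no monochromatic $d$-dimensional tic-tac-toe space, and I will manufacture from it a $2$-coloring of the edges of $\{\pm1\}^{N+1}$ with no monochromatic $(d+1)$-dimensional subcube; this forces $\Gr(d+1)>N+1=\TTT(4,2,d)$.

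The one structural fact that drives everything is the identification $[4]\cong\{\pm1\}^2$ given by $1\leftrightarrow(1,1)$, $2\leftrightarrow(1,-1)$, $3\leftrightarrow(-1,1)$, $4\leftrightarrow(-1,-1)$: under it the tic-tac-toe involution $x\mapsto 5-x$ becomes the map $(a,b)\mapsto(-a,-b)$. Label the coordinates of $\{\pm1\}^{N+1}$ by $0,1,\dots,N$ and color an edge $e=\{u,v\}$ as follows. If $u_0\ne v_0$, orient $e$ so that $p$ is the endpoint with $p_0=1$ and $q$ the one with $q_0=-1$, let $z(e)\in[4]^N$ have $i$-th coordinate equal to the element of $[4]$ corresponding to $(p_i,q_i)\in\{\pm1\}^2$ for $1\le i\le N$, and give $e$ the color $c(z(e))$. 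If $u_0=v_0$, then $e$ lies inside one of the two ``layers'' $\{1\}\times\{\pm1\}^N$ or $\{-1\}\times\{\pm1\}^N$, each a copy of $\{\pm1\}^N$ on coordinates $1,\dots,N$; color $e$ by a coloring of $\{\pm1\}^N$ built by the same recipe from the restriction of $c$ to a sub-$[4]^{N-1}$ of $[4]^N$ (such a restriction again has no monochromatic $d$-dimensional tic-tac-toe space, since any one in the restriction would extend to one in $[4]^N$ via a constant coordinate). Unwound, this is an induction on the number of coordinates, with the vacuous base case being dimensions $\le d$, which contain no $(d+1)$-dimensional subcube at all.

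The crux is the analysis of a putative monochromatic $(d+1)$-dimensional subcube $S$, whose defining injection carries parameters $x_0,\dots,x_d$ into coordinates $0,1,\dots,N$. If coordinate $0$ is constant on $S$, then $S$ lies inside a single layer and all its edges get their color from the recursively built coloring of $\{\pm1\}^N$, which by the inductive hypothesis has no monochromatic $(d+1)$-dimensional subcube --- a contradiction. Otherwise coordinate $0$ of $S$ is, after a harmless sign change and relabeling of parameters, the parameter $x_0$; then $S$ splits into a ``top half'' inside $\{1\}\times\{\pm1\}^N$ and a ``bottom half'' inside $\{-1\}\times\{\pm1\}^N$, and every edge of $S$ from a top point (parameter tuple $x$) to a bottom point (parameter tuple $y$) is of the first type above and carries the color $c(z(x,y))$. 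One then checks that as $x$ and $y$ range independently over $\{\pm1\}^d$, the points $z(x,y)$ sweep out exactly a $d$-dimensional tic-tac-toe space of $[4]^N$: the coordinate block of $S$ assigned to parameter $x_j$ for $1\le j\le d$ (each such block nonempty because $x_0$ already occupies coordinate $0$) becomes a free block whose new $[4]$-parameter encodes the pair $(x_j,y_j)\in\{\pm1\}^2$, with the $\pm$ sign attached to each coordinate of the block turning into the identity or the involution $x\mapsto 5-x$; all remaining coordinates are constant. Hence a monochromatic $S$ would produce a monochromatic $d$-dimensional tic-tac-toe space for $c$, a contradiction.

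The main obstacle is precisely this last identification: tracking the sign data on the blocks of $S$ and confirming that it matches the action of the group $\{e,\pi\}$ on $[4]$, so that $\{z(x,y)\}$ is an honest $d$-dimensional (in particular nondegenerate) tic-tac-toe space rather than some ad hoc configuration. That is the step I would verify carefully, one type of coordinate at a time; the recursion, the trivial base cases, and assembling the final bound $\Gr(d+1)\ge\TTT(4,2,d)+1$ are then routine.
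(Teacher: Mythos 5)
Your proof is correct, and it rests on exactly the same key correspondence as the paper's: under an identification of $[4]$ with $\{\pm1\}^2$ for which $x\mapsto 5-x$ becomes negation of both components, the set of top-to-bottom edges of a $(d+1)$-dimensional subcube that is split by the distinguished coordinate is precisely a $d$-dimensional tic-tac-toe subspace of $[4]^N$. The verification you defer at the end is precisely the paper's Lemma~\ref{lem:hyperbowtie}; when you carry it out, note that coordinates assigned to the parameter $x_0$ itself (other than coordinate $0$) must also be covered --- they become constant coordinates of the tic-tac-toe space --- and that nonemptiness of the block of each $x_j$, $j\ge 1$, comes from the injectivity built into the definition of a parameter set, not from $x_0$ occupying coordinate $0$ (what the latter gives is that these blocks avoid coordinate $0$, so the space is genuinely $d$-dimensional). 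Where you genuinely diverge is in the edges that do not change coordinate $0$: the paper colors such an edge by copying the color of the parallel crossing edge obtained by resetting the $0$th coordinates to $-1$ and $+1$, so the whole cube is governed by the single coloring of $[4]^N$, and a monochromatic subcube lying inside one layer is handled by lifting its edges along the first nonconstant coordinate to an equally colored hyperbowtie; you instead color each layer recursively from a restriction of $c$ to a sub-$[4]^{N-1}$ (correctly observing that restrictions remain free of monochromatic $d$-dimensional tic-tac-toe spaces) and dispose of within-layer subcubes by induction on dimension. Both routes work: the paper's parallel-edge rule avoids the recursion and yields one self-contained coloring (the same device its proof of Theorem~\ref{thm:upper} exploits in reverse), while your induction is routine once the crossing-edge lemma is established. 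Your contrapositive framing is equivalent to the paper's direct argument that every $2$-coloring of $[4]^{\Gr(d+1)-1}$ contains a monochromatic $d$-dimensional tic-tac-toe space.
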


\begin{thm}$\Gr(2) \le \TTT(4,2,6) + 1$.
\label{thm:upper}
\end{thm}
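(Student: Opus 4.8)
\emph{Proof proposal.} The plan is to reduce to the tic-tac-toe number by an explicit ``edge~$\leftrightarrow$~point'' encoding, apply Theorem~\ref{thm:g-r} in its tic-tac-toe form to obtain a monochromatic $6$-dimensional tic-tac-toe space, decode that space into a highly structured subcube of $\{\pm1\}^{m+1}$, and then finish with a self-contained combinatorial lemma about caps in $\mathbb{F}_2^6$.

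Set $m=\TTT(4,2,6)$ and let $\chi$ be any $2$-colouring of the edges of $\{\pm1\}^{m+1}$. Write $\{\pm1\}^{m+1}=\{\pm1\}^m\times\{\pm1\}$ and call the last coordinate the \emph{height}. Identify $[4]$ with $\{\pm1\}^2$ so that the tic-tac-toe involution $x\mapsto 5-x$ becomes coordinatewise negation; then every letter of a word $x\in[4]^m$ splits into two $\{\pm1\}$-bits, giving $x=(x^{(1)},x^{(2)})$ with $x^{(1)},x^{(2)}\in\{\pm1\}^m$. Let $e(x)$ be the edge joining the height-$(+1)$ point $(x^{(1)},+1)$ to the height-$(-1)$ point $(x^{(2)},-1)$; this is always a genuine edge since the heights differ. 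Colour $x\in[4]^m$ by $\chi(e(x))$. By the definition of $\TTT(4,2,6)$ this colouring of $[4]^m$ has a monochromatic $6$-dimensional tic-tac-toe space $S=\{g(y):y\in[4]^6\}$; by colour symmetry we may assume its colour is red.

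Next I would decode $S$. Because $x\mapsto 5-x$ acts by coordinatewise negation, the map $y\mapsto (g(y))^{(1)}$ depends only on the first bits of $y_1,\dots,y_6$ and, as a function of those bits, is a $6$-parameter subcube map of $\{\pm1\}^m$ (injective, since $g$ uses all six parameters); the map $y\mapsto (g(y))^{(2)}$ is the \emph{same} parameter map applied to the second bits. Hence $Q_1=\{(g(y))^{(1)}\}$ and $Q_2=\{(g(y))^{(2)}\}$ are $6$-dimensional subcubes of $\{\pm1\}^m$ sharing a common tangent subspace $W_0\subseteq\mathbb{F}_2^m$, and as $y$ runs over $[4]^6$ the edge $e(g(y))$ runs over \emph{every} edge with one endpoint in $\mathcal{U}:=Q_1\times\{+1\}$ and the other in $\mathcal{V}:=Q_2\times\{-1\}$. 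Thus $\mathcal{U}$ and $\mathcal{V}$ are disjoint $6$-subcubes of $\{\pm1\}^{m+1}$ with common tangent subspace $W:=W_0\times\{0\}$, so $\mathcal{W}:=\mathcal{U}\cup\mathcal{V}$ is a $7$-dimensional subcube, and every edge of $\mathcal{W}$ joining $\mathcal{U}$ to $\mathcal{V}$ is red.

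It remains to show that any such $\mathcal{W}$ contains a monochromatic planar $K_4$; this combinatorial core, and in particular its last step, is the main obstacle. Identify the shared direction set $W\setminus\{0\}$ with $\mathbb{F}_2^6\setminus\{0\}$ (a direction $\gamma$ makes sense simultaneously in $\mathcal{U}$ and $\mathcal{V}$). If some $\gamma$ carries a red edge $\{p,p+\gamma\}\subseteq\mathcal{U}$ and a red edge $\{q,q+\gamma\}\subseteq\mathcal{V}$, then $\{p,\,p+\gamma,\,q,\,q+\gamma\}$ is a $2$-subcube whose two $\gamma$-edges are red by choice and whose other four edges join $\mathcal{U}$ to $\mathcal{V}$ and hence are red — done. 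Otherwise, letting $D_U$ (resp.\ $D_V$) be the set of directions all of whose edges in $\mathcal{U}$ (resp.\ $\mathcal{V}$) are blue, we get $D_U\cup D_V=\mathbb{F}_2^6\setminus\{0\}$; and if $D_U$ contained a triple $\gamma_1,\gamma_2,\gamma_1+\gamma_2$ then any $2$-subcube of $\mathcal{U}$ with edge-directions $\gamma_1,\gamma_2$ would be all blue (likewise for $D_V$), again finishing. So $D_U$ and $D_V$ would both be \emph{caps} (no three elements summing to zero), and one must rule out $\mathbb{F}_2^6\setminus\{0\}$, a set of $63$ elements, being covered by two caps. I would first show a cap has at most $32$ elements by a short averaging argument — for $v\in S$ the translate $(S\cup\{0\})+v$ meets $S\cup\{0\}$ only in $\{0,v\}$, forcing $2(|S|+1)-64\le 2$ — and then show the extremal $32$-element caps are too rigid for two of them to cover everything; this inequality is tight at dimension $6$, which is exactly where the parameter $6$ enters. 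Assembling the three paragraphs yields $\Gr(2)\le m+1=\TTT(4,2,6)+1$.
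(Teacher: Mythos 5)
Your first two paragraphs are essentially the paper's own reduction: your encoding of height-crossing edges by $[4]^m$ is the map $\Phi$, and your decoding of a monochromatic tic-tac-toe space into a $7$-dimensional subcube all of whose top-to-bottom edges are red is exactly Lemma~\ref{lem:hyperbowtie} plus the first step of the paper's proof of Theorem~\ref{thm:upper}. The paper then packages the remaining combinatorial core as Lemma~\ref{lem:computer}: a $2$-coloring of the edges of $\{\pm1\}^6$ that is constant on parallel classes must contain a monochromatic planar $K_4$; this is verified only by a SAT search over the $364$ parallel classes, and it is tight (colorings without a monochromatic planar $K_4$ exist for dimension at most $5$).

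The genuine gap is your third paragraph, which tries to replace that computer-checked lemma with a cap argument in $\mathbb{F}_2^6$. The identification of the direction set with $\mathbb{F}_2^6\setminus\{0\}$ is not faithful: parallel classes of edges in a $6$-subcube are elements of $\{-1,0,1\}^6\setminus\{0\}$ up to sign ($364$ classes, as in the paper's SAT encoding), and reducing mod $2$ conflates distinct classes, most damagingly $a+b$ with $a-b$. A planar $K_4$ in $\mathcal{U}$ requires two directions $a,b$ with \emph{disjoint support} such that all four classes $a,b,a+b,a-b$ lie in $D_U$; a Schur triple $\gamma_1,\gamma_2,\gamma_1+\gamma_2$ in $D_U$ (with no disjointness hypothesis, and with only the mod-$2$ sum controlled) gives no such configuration, so ``$D_U$ and $D_V$ are caps'' does not follow from the absence of blue planar $K_4$s on the top and bottom. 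The cap statement you want is in fact true in every dimension $\ge 2$ (any two sum-free sets in $\mathbb{F}_2^n$ covering all nonzero vectors would each have to be nearly a nonzero coset of an index-$2$ subgroup, and two such cosets miss a codimension-$2$ subspace), which is itself a warning sign: if your reduction were valid, the same argument would prove the dimension-$5$ case of Lemma~\ref{lem:computer}, which is false. So the final step cannot be repaired along these lines; one needs either the paper's computer verification of Lemma~\ref{lem:computer}, or the weaker human-provable substitute Lemma~\ref{lem:uncomputer} (Folkman-type argument), which only yields Theorem~\ref{thm:upperx} with $d = 2\uparrow\uparrow 18$ rather than $d=6$.
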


In particular, if we bound $\TTT(4,2,6)$ by $\HJ(4,2,6)$, then by Lemma~\ref{lem:426}, which analyzes the growth rate of Shelah's bound on $\HJ(t,k,d)$, we have $\Gr(2) \le 2 \uparrow \uparrow 2 \uparrow \uparrow 2 \uparrow \uparrow 9 < 2 \uparrow\uparrow \uparrow 6$. This is a significant improvement on all previously known bounds.

The proof of Theorem~\ref{thm:upper}, however, is not completely satisfactory. For Lemma~\ref{lem:computer}, which states that under some strong simplifying assumptions a monochromatic $K_4$ exists in dimension $n=6$ (a tight bound), we only have a computer-aided proof. However, a weaker version of this lemma can be easily shown, yielding:

\begin{thm}There exists a positive integer $d$, e.g.\ $d=2 \uparrow \uparrow 18$, such that $\Gr(2) \le \TTT(4,2,d) + 1$.
\label{thm:upperx}
\end{thm}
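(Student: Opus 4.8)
The plan is to retrace the proof of Theorem~\ref{thm:upper} and to replace its single computer-assisted input, Lemma~\ref{lem:computer}, by an elementary but deliberately wasteful substitute; the first two steps are common to both theorems. Fix the target $d$ (we will take $d=2\uparrow\uparrow18$), set $m=\TTT(4,2,d)$, and let $\chi$ be an arbitrary $2$-edge-coloring of $\{\pm1\}^{m+1}$; we must produce a monochromatic planar $K_4$. First I would single out coordinate $m+1$: once an edge $\{p,q\}$ with $p_{m+1}\neq q_{m+1}$ is oriented so that $p_{m+1}=1$, it is determined by recording, for each $i\le m$, exactly one of the four states $p_i=q_i=1$, $p_i=q_i=-1$, $(p_i,q_i)=(1,-1)$, $(p_i,q_i)=(-1,1)$. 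This identifies the edges of $\{\pm1\}^{m+1}$ meeting both halves $\{z_{m+1}=\pm1\}$ with the points of $[4]^m$; transporting $\chi$ through this dictionary gives a $2$-coloring $\chi'$ of $[4]^m$, and one checks directly that a $d$-dimensional tic-tac-toe subspace of $[4]^m$ unwinds to the family of \emph{crossing} edges of a subcube $W$ of $\{\pm1\}^{m+1}$ — the edges joining the two halves into which $W$ is split by one coordinate direction. Since $m=\TTT(4,2,d)$, $\chi'$ has a monochromatic tic-tac-toe subspace, and so we obtain such a $W$, all of whose crossing edges are (say) red, and whose dimension can be made as large as we wish by taking $d$ large.

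At this point Theorem~\ref{thm:upper} invokes Lemma~\ref{lem:computer}, which remarkably forces a monochromatic $K_4$ already when $\dim W=6$. For Theorem~\ref{thm:upperx} I would instead prove, by hand, the much weaker statement that a monochromatic $K_4$ appears once $\dim W$ is at least, say, $2\uparrow\uparrow18$, by spending one further off-the-shelf Ramsey bound. Call an edge of a subcube \emph{monotone} if, restricted to the coordinates on which its endpoints differ, one endpoint is all $+1$ and the other all $-1$ (these are exactly the $G=\{e\}$ combinatorial lines) and \emph{mixed} otherwise. Using the crossing-edge structure of $W$ to keep a \emph{point}-coloring in play rather than a line-coloring, apply the Hales--Jewett/Graham--Rothschild machinery (Theorem~\ref{thm:g-r} with $G=\{e\}$) to pass to a bounded-dimensional combinatorial subcube $Z\subseteq W$ on which all monotone edges have a single color $\gamma$. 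Two elementary facts then finish it: (i) every mixed edge \(\{p,q\}\) of \(Z\) is the ``diagonal'' of a $2$-dimensional subcube of $Z$ whose other five edges are monotone — build the square by splitting the difference set of $p$ and $q$ according to the sign of \(p\) there; and (ii) as soon as $\dim Z\ge 4$, $Z$ contains a $2$-dimensional subcube all six of whose edges are mixed, e.g.\ the image of $(x_1,x_2)\mapsto(x_1,-x_1,x_2,-x_2)$. If some mixed edge of $Z$ is colored $\gamma$, fact~(i) produces a monochromatic-$\gamma$ planar $K_4$; otherwise every mixed edge is colored $\bar\gamma$ and the square of fact~(ii) is a monochromatic-$\bar\gamma$ planar $K_4$. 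Feeding Shelah's primitive-recursive bound (Lemma~\ref{lem:426}) into the single Hales--Jewett/Graham--Rothschild step is exactly what inflates the dimension requirement from $6$ to a tower of height $18$, and yields $\Gr(2)\le\TTT(4,2,2\uparrow\uparrow18)+1$.

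The main obstacle I expect is the weak lemma, and inside it the reduction to a \emph{bounded-dimensional} combinatorial subcube $Z$ with monochromatic monotone edges at the cost of only one tower. The naive routes — $2$-coloring the points of a half of $W$ by the edge-data they see, or $2$-coloring combinatorial lines directly — either re-enter the cube-Ramsey problem itself (circular, since that is what we are bounding) or invoke the Graham--Rothschild line-coloring theorem, whose Shelah bound is a tower of towers and would ruin the comparison with Theorem~\ref{thm:upper}. The whole point of carrying the ``all crossing edges red'' structure through the tic-tac-toe step is that it lets one get away with a point-coloring Hales--Jewett argument here, so that the argument costs one tower rather than an iterated tower; pinning down the cleanest such reduction — and hence the honest constant $18$ — is the fiddly part. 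Everything else — the dictionary, the transport of tic-tac-toe subspaces to subcubes, and the monotone/mixed case analysis via facts~(i) and~(ii) — is elementary bookkeeping, and the intentionally enormous dimension bound is precisely the price of replacing the computer search of Lemma~\ref{lem:computer} by a human-checkable proof.
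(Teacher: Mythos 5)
Your first step (the dictionary $\Phi$, the transport of a monochromatic tic-tac-toe subspace to a subcube $W$ all of whose crossing edges are red) matches the paper, and your endgame is sound: fact (i) (every mixed edge is a diagonal of a square whose other five edges are monotone) and fact (ii) (a $4$-dimensional cube contains a square with all six edges mixed, e.g.\ spanned by directions of the form $b^1-b^2$ and $b^3-b^4$) are correct and are essentially the paper's own final case analysis in Lemma~\ref{lem:uncomputer}, where the roles of ``monotone'' and ``mixed'' are played by subset sums $\sum_{i\in I}a^i$ versus signed combinations such as $a-b$ and $a-b+c-d$. The genuine gap is the middle step, which you yourself flag as ``the fiddly part'' but never supply: producing a bounded-dimensional $Z\subseteq W$ on which \emph{all monotone edges} have a single color. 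As you note, monotone edges are exactly the $G=\{e\}$ combinatorial lines, so making them all monochromatic on a subspace is the $k=1$ case of Theorem~\ref{thm:g-r}, not a point-coloring ($k=0$) statement; invoking ``Hales--Jewett with $G=\{e\}$'' does not give it, and the phrase ``using the crossing-edge structure to keep a point-coloring in play'' is not backed by any actual construction of a point coloring. Without that construction the proof does not close, and the claimed cost of ``one tower'' is unsubstantiated.

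The paper fills precisely this hole with two ingredients absent from your proposal. First, the red crossing edges are used to convert the edge-coloring of the top and bottom copies of $\{\pm1\}^d$ into a genuine \emph{point} coloring of parallel-edge classes (directions): a class is blue if all its top edges are blue, red if all its bottom edges are blue, and if neither holds one gets an immediate red planar $K_4$ from a red top edge, a parallel red bottom edge, and four red middle edges. Second, to this direction-coloring (restricted to directions in $\{0,1\}^d$) the paper applies Folkman's finite unions theorem, proved by iterating $\HJ(2,2,\cdot)$ with Shelah's bound; this yields four disjoint-support directions all of whose $15$ subset sums are one color — exactly the ``all monotone edges of a $4$-dimensional frame are monochromatic'' input your facts (i) and (ii) need, but at the level of directions, where a point-coloring Ramsey theorem suffices. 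The iteration (seven directions, each step costing one application of $\HJ(2,2,\cdot)$) is also where $2\uparrow\uparrow18$ actually comes from, rather than from a single Hales--Jewett application as you suggest. So your outline is recoverable, but only by importing the paper's parallel-class reduction and Folkman's theorem, which are the substantive missing ideas.
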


This is still easily strong enough to yield $2 \uparrow \uparrow \uparrow 6$ as an upper bound.

We also consider a simpler problem: given an edge-coloring $\{\pm1\}^n$, to find a monochromatic planar rectangle. The points defining a rectangle are still a $2$-parameter set; however, rather than requiring that all $6$ edges between them are monochromatic, we only consider the $4$ edges between ``adjacent" points. 

This simplified problem has a much smaller upper bound:

\begin{thm}
An edge-coloring of $\{\pm1\}^{78}$ necessarily contains a monochromatic planar square whose sides have Hamming length $2$.
\label{thm:squares}
\end{thm}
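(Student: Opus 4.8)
The plan is to recast the statement combinatorially and then run a two‑phase argument. Write $x\oplus S$ for the vertex of $\{\pm1\}^n$ obtained from $x$ by negating the coordinates in a set $S\subseteq[n]$. A planar square whose sides have Hamming length $2$ is determined by two \emph{disjoint} $2$‑element sets $P,Q\subseteq[n]$ and a base vertex $x$: its vertices are $x,\ x\oplus P,\ x\oplus Q,\ x\oplus P\oplus Q$, and its four sides are the edges $\{x,x\oplus P\}$, $\{x\oplus Q,\ x\oplus P\oplus Q\}$ (the two ``$P$‑sides''), together with $\{x,x\oplus Q\}$ and $\{x\oplus P,\ x\oplus P\oplus Q\}$ (the two ``$Q$‑sides''). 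With $P,Q$ disjoint and of size $2$ these four points are coplanar and the figure is a genuine square, and it is unchanged if $x$ is replaced by any of its other three vertices. Since only the distance‑$2$ edges of the complete graph on $\{\pm1\}^n$ play any role, it suffices to show that for $n=78$ every $2$‑coloring of those edges admits disjoint $2$‑sets $P,Q$ and a vertex $x$ for which all four sides above receive a common color.

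The two $P$‑sides are the two $P$‑flips located at $x$ and at $x\oplus Q$, and their colors depend on $x$ only through its restriction to the coordinates outside $P$ (and on whether the two coordinates of $P$ agree). My plan is: \textbf{Phase 1}, reserve a large block of coordinates as ``context''; reading, for each small active pair $R$, the color of an $R$‑flip as a $2$‑coloring that varies with the context, apply a Hales--Jewett‑style reduction over the binary alphabet (whose relevant parameters grow only mildly) to pass to a combinatorial subspace of contexts on which the color of every active $R$‑flip is unaffected by simultaneously flipping a disjoint active pair. On such a context the two $P$‑sides automatically share a color, and likewise the two $Q$‑sides, so we are reduced to matching just the ``near'' $P$‑side with the ``near'' $Q$‑side. \textbf{Phase 2}, with the few active coordinates that remain: at a base vertex a majority of the active pairs have their $P$‑flip of one color, say red; among those, demanding that the paired $Q$‑side also be red retains a positive fraction; iterating this bookkeeping a bounded number of times leaves two disjoint active pairs $P,Q$ all four of whose sides are red. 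Tracking how many coordinates Phase 1 and each pigeonhole step in Phase 2 consume is what produces the explicit figure $78$.

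The real obstacle is the coupling between a side at $x$ and the ``far'' side at the opposite vertex $x\oplus P\oplus Q$, which nothing a priori forces to agree; and it cannot be circumvented by fixing one base vertex, since the coloring that paints every distance‑$2$ edge incident to a chosen vertex red and all other distance‑$2$ edges blue has no monochromatic square through that vertex. That is precisely why the context coordinates must be spent in Phase 1. The delicate point will be making the Hales--Jewett step cooperate with the requirement that $P$ and $Q$ be $2$‑element sets rather than single coordinates, as in ordinary combinatorial lines; and, as with Lemma~\ref{lem:computer} elsewhere in this paper, I expect that pushing the bound down to exactly $78$ may be aided by a short computer search at the end.
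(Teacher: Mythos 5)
This is a plan, not a proof: neither phase is actually carried out, and the step that would have to do all the quantitative work is the one that cannot work at the stated scale. Your Phase 1 asks for a subspace of ``contexts'' on which the color of every active pair-flip is invariant under flipping any disjoint active pair. That is not a single application of Hales--Jewett over a binary alphabet; it is a simultaneous stability requirement for many edge-classes at once (the colored objects are edges indexed by a pair of coordinates \emph{and} a context, not points of $\{0,1\}^m$), and any Hales--Jewett-type argument that delivers it costs dimensions on the order of Shelah's bounds, e.g.\ $\HJ(2,2,d)\le 2^{2^{2d}}$ --- astronomically more than $78$. So the closing claim that ``tracking how many coordinates Phase 1 and each pigeonhole step consume is what produces the explicit figure $78$'' is unsupported, and the concession that the bound may in the end need a computer search confirms that the argument, as proposed, does not establish the theorem. (Phase 2, granted Phase 1, is fine --- once near and far sides agree it is a pigeonhole among disjoint pairs --- but that is the easy half.)

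The paper reaches $78$ by a completely different and purely local counting argument, with no Hales--Jewett machinery at all: an averaging lemma showing that at least $\frac12-\frac1{2\lfloor n/2\rfloor-2}$ of all right angles formed by length-$2$ edges are monochromatic, a second averaging lemma showing the same kind of bound, $\frac12-\frac1{2(n-3)}$, for pairs of parallel length-$2$ edges at distance $2$, and a parity lemma (fifteen squares on five coordinate-pairs use each edge twice, so they cannot all have an odd number of red edges) showing at most $\frac{14}{15}$ of the $2\times2$ squares have an odd number of red edges. Classifying square colorings into four types and taking a small linear combination of these three inequalities forces a positive proportion of fully monochromatic squares once $n\ge 78$. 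If you want to salvage your approach, it would prove the qualitative statement that some finite dimension suffices, but with a bound of tower type; to get anywhere near $78$ you need a density/counting argument of the paper's kind rather than a structural Ramsey reduction.
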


\section{Bounds on $\Gr(d)$}

\subsection{Setup}

Let $Q$ be the cube $\{\pm1\}^{n+1}$ with coordinates numbered $0, \dots, n$. Let $Q^- = \{x \in Q : x_0 = -1\}$ and $Q^+ = \{x \in Q : x_0 = +1\}$. 

Let $\phi : \{\pm1\}^2 \to [4]$ be given by $\phi(-1,-1) = 1$, $\phi(-1,+1) = 2$, $\phi(+1,-1) = 3$, $\phi(+1,+1) = 4$. We use $\phi$ to define a bijection $\Phi$ from the edges $Q^- \times Q^+$ to $[4]^n$:
$$\Phi(x^-, x^+) = (\phi(x^-_1, x^+_1), \dots, \phi(x^-_n, x^+_n)).$$

For a $(d+1)$-dimensional subcube of $Q$, there are three possibilities: either it is contained entirely in $Q^-$, or entirely in $Q^+$, or half of its vertices are in $Q^-$ and half are in $Q^+$. In the third case, we call the subgraph formed by the edges of the subcube going from $Q^-$ to $Q^+$ a \emph{$d$-dimensional hyperbowtie} (the name is formed by analogy with the case $d=1$, in which case the four edges make a bowtie shape).

\begin{lem}
\label{lem:hyperbowtie}
If $S \subseteq Q^- \times Q^+$ is a set of edges of $Q$, then $S$ is a $d$-dimensional hyperbowtie if and only if $\Phi(S)$ is a $d$-dimensional tic-tac-toe subspace of $[4]^n$.
\end{lem}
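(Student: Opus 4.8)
The plan is to unwind both definitions—$d$-dimensional hyperbowtie and $d$-dimensional tic-tac-toe subspace—and check that the encoding $\Phi$ carries one to the other. Recall that a $(d+1)$-dimensional subcube of $Q$ is the image of an injection $f : \{\pm1\}^{d+1} \to Q$ of the prescribed form; since we are in the ``bowtie'' case, exactly one coordinate, say coordinate $i_0$, is governed by a parameter (the one controlling $x_0$) via either $x \mapsto x$ or $x \mapsto -x$, and by relabeling parameters we may assume this is the first parameter $x_1$, and by composing with the flip $x_1 \mapsto -x_1 \in G$ we may assume $f_0(x_1,\dots,x_{d+1}) = x_1$. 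So the subcube is parametrized so that $x_1 = -1$ picks out its $Q^-$-vertices and $x_1 = +1$ its $Q^+$-vertices.

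First I would write down, for each coordinate $1 \le i \le n$, the possible forms of $f_i$: either $f_i$ is a constant $a_i \in \{\pm1\}$, or $f_i(x) = \sigma(x_j)$ for some $2 \le j \le d+1$ and $\sigma \in \{x\mapsto x, x \mapsto -x\}$, or $f_i(x) = \sigma(x_1)$. (Note $f_i$ cannot involve $x_1$ and some $x_j$, $j\ge 2$, simultaneously.) Now fix a point $(y_2,\dots,y_{d+1}) \in \{\pm1\}^{d}$ of the hyperbowtie; it corresponds to the edge $(x^-,x^+)$ with $x^\mp = f(\mp 1, y_2, \dots, y_{d+1})$. I compute $\Phi(x^-,x^+)_i = \phi(x^-_i, x^+_i)$ in each of the three cases: if $f_i \equiv a_i$ is constant then $x^-_i = x^+_i = a_i$, so this coordinate of $\Phi$ is constantly $\phi(a_i,a_i) \in \{1,4\}$; if $f_i(x) = \sigma(x_j)$ with $j \ge 2$ then $x^-_i = x^+_i = \sigma(y_j)$, so again $\Phi(x^-,x^+)_i = \phi(\sigma(y_j),\sigma(y_j))$; if $f_i(x) = x_1$ then $(x^-_i, x^+_i) = (-1,+1)$ giving $\phi(-1,+1) = 2$ constantly, while if $f_i(x) = -x_1$ then $(x^-_i,x^+_i) = (+1,-1)$ giving $\phi(+1,-1) = 3$ constantly.

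The key observation is then a small case check on the $2\le j$ branch: the map $z \mapsto \phi(z,z)$ sends $-1 \mapsto 1$ and $+1 \mapsto 4$, and $z \mapsto \phi(-z,-z)$ sends $-1 \mapsto 4$ and $+1 \mapsto 1$; since on $[4]$ the tic-tac-toe involution $\pi$ sends $1 \leftrightarrow 4$ and $2 \leftrightarrow 3$, the coordinate $i$ of $\Phi$ restricted to the hyperbowtie is exactly $e(y_j')$ or $\pi(y_j')$ where $y_j' = \phi(y_j,y_j) \in \{1,4\}$ is the image of the $j$-th parameter under the induced reparametrization. Meanwhile the constant-$\{1,4\}$ and constant-$\{2,3\}$ coordinates are exactly the fixed-coordinate type in a tic-tac-toe subspace. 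Assembling these, $\Phi$ restricted to the hyperbowtie is precisely an injection $[4]^d \to [4]^n$ (injectivity since $\Phi$ is a bijection, or directly since the original $f$ was injective and used each $x_j$) of tic-tac-toe form, i.e.\ a $d$-dimensional tic-tac-toe subspace. For the converse, I would simply run the argument backwards: given a tic-tac-toe subspace $g : [4]^d \to [4]^n$, each coordinate is a constant in $[4]$ or $e(x_j)$ or $\pi(x_j)$; a constant in $\{1,4\}$ or $\{2,3\}$ pulls back under $\phi$ to a coordinate pair that agrees, or that is $(-1,+1)$ or $(+1,-1)$; and an $e(x_j)$ or $\pi(x_j)$ coordinate, with $x_j$ ranging over all of $[4]$, forces the preimage under $\phi$ not to lie in $\{\pm1\}$—so actually I must be slightly more careful here.

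The main obstacle, and the place to be careful, is precisely this last point: a general tic-tac-toe subspace of $[4]^n$ need not be the image of a hyperbowtie, because a coordinate of the form $x_j$ lets $x_j$ take all four values in $[4]$, whereas the $\phi$-coordinates arising from a hyperbowtie take only values in $\{1,2,3,4\}$ in a correlated way across coordinates (all ``$x_1$-type'' coordinates are $2$ or all are $3$ on the two sides, etc.). The resolution is that we are not claiming every tic-tac-toe subspace arises this way; rather, Lemma~\ref{lem:hyperbowtie} as stated is an ``if and only if'' about a \emph{fixed} edge set $S$: $S$ is a hyperbowtie iff $\Phi(S)$ is a tic-tac-toe subspace. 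So for the converse I start from an $S$ with $\Phi(S)$ a $d$-dimensional tic-tac-toe subspace, pull back the parametrizing injection $g$ along $\Phi^{-1}$, and verify coordinate-by-coordinate (using that $\Phi(S) \subseteq [4]^n$ and the explicit inverse of $\phi$) that the resulting parametrization of $S$ has the required $(d+1)$-parameter form with $f_0 = \pm x_1$; the correlation issue does not arise because $S$, being in the image of $\Phi^{-1}$ of an actual combinatorial-type object, already has all the needed structure, and the only thing to check is that $\phi^{-1}$ is compatible with the two group actions, which is the same $1 \leftrightarrow 4$, $2 \leftrightarrow 3$ bookkeeping as before. I would finish by remarking that the whole proof is just the statement that $\phi$ intertwines the diagonal $\{\pm1\}$-action on $\{\pm1\}^2$ (the $Q^-$/$Q^+$ synchronized flips) with the $\{e,\pi\}$-action on $[4]$, together with matching up constant coordinates, and that the two ``$x_0$-flip'' constant values $2,3$ account for the sign of $f_0$.
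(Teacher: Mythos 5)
There is a genuine gap, and it comes from misreading what the edge set of a hyperbowtie is. The ``edges'' of a $(d+1)$-dimensional subcube are edges of the complete graph on its $2^{d+1}$ vertices, so the hyperbowtie consists of \emph{all} $4^d$ pairs $(u,v)$ with $u$ in the $Q^-$-half and $v$ in the $Q^+$-half (this is why four edges form a bowtie when $d=1$, and why the count $4^d$ matches the size of a $d$-dimensional tic-tac-toe space). In your computation you fix a single parameter vector $(y_2,\dots,y_{d+1})$ and take the edge from $f(-1,y_2,\dots,y_{d+1})$ to $f(+1,y_2,\dots,y_{d+1})$, i.e.\ you only parametrize the $2^d$ edges whose two endpoints carry the \emph{same} parameter values. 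On those edges every coordinate of the form $f_i(x)=\sigma(x_j)$, $j\ge 2$, yields $\phi(\sigma(y_j),\sigma(y_j))\in\{1,4\}$, so the set you compute has only $2^d$ points with parameter coordinates confined to $\{1,4\}$ --- that is not a $d$-dimensional tic-tac-toe subspace, and your ``key observation'' paragraph is proving the wrong statement. The correct bookkeeping is to let each $z_j\in[4]$ decode to a pair $(x_j,y_j)=\phi^{-1}(z_j)$ and to use $x_j$ for the $Q^-$-endpoint and $y_j$ for the $Q^+$-endpoint \emph{independently}, i.e.\ consider the edge from $f(-1,x_1,\dots,x_d)$ to $f(+1,y_1,\dots,y_d)$; then the coordinate $f_i(x)=x_j$ gives $\phi(x_j,y_j)=z_j$ and $f_i(x)=-x_j$ gives $5-z_j$, so the parameters genuinely range over all of $[4]$ and the image is exactly a tic-tac-toe subspace.

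Your handling of the converse compounds the problem. You correctly sense a tension (``$x_j$ takes all four values in $[4]$'' versus the values your parametrization produces), but you misdiagnose it: the tension is an artifact of your restricted edge set, not a feature of the lemma, and your proposed resolution --- ``we are not claiming every tic-tac-toe subspace arises this way'' --- is wrong. Since $\Phi$ is a bijection on $Q^-\times Q^+$, any tic-tac-toe subspace $T$ is $\Phi(S)$ for $S=\Phi^{-1}(T)$, and the ``only if'' half of the lemma asserts precisely that this $S$ is a hyperbowtie; moreover the proof of Theorem~\ref{thm:upper} uses exactly this direction to pull a monochromatic tic-tac-toe space back to a monochromatic hyperbowtie. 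Once you adopt the correct decoding above, the converse is immediate from the observation (made at the end of the paper's argument) that every admissible form for a coordinate of a tic-tac-toe parametrization --- constant $1$ or $4$, constant $2$ or $3$, $z_j$, or $5-z_j$ --- is realized by some choice of $f_i$ (constant $\pm1$, $\pm x_0$, $x_j$, $-x_j$ respectively), so every tic-tac-toe subspace is the $\Phi$-image of a hyperbowtie.
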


\begin{proof}
Let $f : \{\pm1\}^{d+1} \to \{\pm1\}^{n+1}$ be a function whose image is a $(d+1)$-dimensional subcube half contained in $Q^-$ and half in $Q^+$. Then $f_0$ cannot be constant; because so far, all coordinates of $\{\pm1\}^{d+1}$ are symmetric, we may assume that $f_0(x_0, \dots, x_d) = x_0$. Define $g : [4]^d \to [4]^n$ as follows: if $(z_1, \dots, z_d) \in [4]^d$, let $(x_i,y_i) = \phi^{-1}(z_i)$ for $1 \le i \le d$, and let $g(z_1, \dots, z_d) = \Phi(f(-1,x_1,\dots,x_d), f(+1,y_1,\dots,y_d))$. As $z$ varies, the edge from $(-1,x)$ to $(+1,y)$ varies over all edges in $S$, the $d$-dimensional hyperbowtie corresponding to the image of $f$. Therefore the image of $g$ is $\Phi(S)$.

For each coordinate $1 \le i \le d$, we consider all possibilities for $f_i$, and check what form $g_i$ then has:
\begin{itemize}
\item If $f_i$ is a constant $\pm1$, then $f_i(-1,x_1, \dots, x_d) = f_i(+1,y_1, \dots,y_d) = \pm1$, and so $g_i(z) = \phi(\pm1, \pm1)$ which is either a constant $1$ or a constant $4$.

\item If $f_i(x) = \pm x_0$, then $f_i(-1, x_1, \dots, x_d)$ and $f_i(+1,y_1, \dots, y_d)$ are independent of $x,y$ and have opposite signs, so $g_i(z) = \phi(\pm1, \mp 1)$ which is either a constant $2$ or a constant $3$.

\item If $f_i(x) = x_j$, for $j \ge 1$, then $g_i(z) = \phi(x_j, y_j) = z_j$.

\item If $f_i(x) = -x_j$, for $j \ge 1$, then $g_i(z) = \phi(-x_j, -y_j)$. It can be checked that $\phi(-x,-y) = 5-\phi(x,y)$, and so $g_i(z) = 5 - \phi(x_j,y_j) = 5-z_j$.
\end{itemize}

Therefore $g$ has the correct form for the image of $g$ to be a $d$-dimensional tic-tac-toe subspace. Moreover, every possibility for $g_i$ can be obtained by some choice of $f_i$, and so every $d$-dimensional tic-tac-toe subspace can be obtained in this way: as the image under $\Phi$ of a $d$-dimensional hyperbowtie.
\end{proof}

\subsection{The lower bound}

\begin{proof}[Proof of Theorem~\ref{thm:lower}]
Let $n = \Gr(d+1) - 1$ and let $Q$ be the cube $\{\pm1\}^{n+1}$. 

Pick an arbitrary $2$-coloring of $[4]^n$. The map $\Phi$ is a bijection between $[4]^n$ and those edges of $Q$ which change the first coordinate, so we use this bijection to assign those edges a color. To color the remaining edges, we assign the edge from $(x_0, x_1, \dots, x_n)$ to $(y_0, y_1, \dots, y_n)$, where $x_0 = y_0$, the same color as the edge from $(-1,x_1, \dots, x_n)$ to $(+1, y_1, \dots, y_n)$.

Because $n+1 = \Gr(d+1)$, a $(d+1)$-dimensional subcube of $Q$ is monochromatic. Suppose this subcube is half contained in $Q^+$ and half in $Q^-$. Then the edges of the subcube contained in $Q^+ \times Q^-$ form a monochromatic $d$-dimensional hyperbowtie, and by Lemma~\ref{lem:hyperbowtie}, $\Phi$ maps it to a monochromatic $d$-dimensional tic-tac-toe space in $[4]^n$.

Now consider the other possibility: the subcube is entirely contained in $Q^+$ or $Q^-$. Let $i$ be the first coordinate which is not constant on this subcube. We restrict our attention to the $4^d$ edges in the subcube which change coordinate $i$: edges from $(x_0, \dots, x_{i-1}, -1, x_{i+1}, \dots, x_n)$ to $(y_0, \dots, y_{i-1}, +1, y_{i+1}, \dots, y_n)$, where $x_0 = y_0$, $x_1 = y_1$, \dots, $x_{i-1} = y_{i-1}$. Alter each edge by replacing $x_0$ with $-1$ and $y_0$ with $+1$. By construction, the new edge has the same color, so the edges we obtain will also be monochromatic. But now the edges we get form a $d$-dimensional hyperbowtie, and we use Lemma~\ref{lem:hyperbowtie} again to obtain a monochromatic $d$-dimensional tic-tac-toe space in $[4]^n$.

Therefore we have shown that $[4]^n$ always contains a monochromatic $d$-dimensional tic-tac-toe space, and so $n \ge \TTT(4,2,d)$.
\end{proof}

\subsection{A special case}

To prove the upper bound on $\Gr(2)$, we will first state a lemma about a special case of this problem.

\begin{lem}
\label{lem:computer}
Suppose the cube $\{\pm1\}^6$ is $2$-colored so that all parallel edges receive the same color. Then the cube contains a monochromatic planar $K_4$.
\end{lem}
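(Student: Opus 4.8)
The plan is to exploit the hypothesis that parallel edges receive the same color in order to drastically reduce the combinatorial search space, and then to argue — either by a direct structural argument or by an exhaustive finite case analysis — that no such coloring can avoid a monochromatic planar $K_4$. The key observation is that under the ``all parallel edges agree'' assumption, the color of an edge of $\{\pm1\}^6$ depends only on its \emph{direction vector}, i.e.\ on the set of coordinates in which its two endpoints differ. Thus a coloring is determined by a $2$-coloring of the $2^6 - 1 = 63$ nonzero vectors in $\{0,1\}^6$ (equivalently, of the nonempty subsets of $[6]$), rather than by a coloring of all $\binom{2^6}{2}$ edges. Moreover, the coplanar $K_4$'s we need to find also have a clean description: four points spanning a $2$-dimensional affine subspace have pairwise difference vectors $v_1, v_2, v_1 \oplus v_2$ for some pair of distinct nonzero vectors $v_1, v_2$ (where $\oplus$ is coordinatewise XOR), together with the three ``doubled'' pairs that reuse the same three direction classes. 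So the question becomes: can we $2$-color the nonzero elements of $\mathbb{F}_2^6$ so that no coset-triple $\{v_1, v_2, v_1 \oplus v_2\}$ is monochromatic?

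First I would recast the problem in exactly this language: a valid coloring of the cube avoiding a monochromatic planar $K_4$ is precisely a $2$-coloring $c: \mathbb{F}_2^6 \setminus \{0\} \to \{\text{red}, \text{blue}\}$ such that for every $2$-dimensional linear subspace $W \le \mathbb{F}_2^6$, the three nonzero vectors of $W$ are not all the same color. (One should double-check that a monochromatic set of three such direction classes does force an actual monochromatic planar $K_4$ on four genuine points of the cube, and conversely — this is where Lemma~\ref{lem:hyperbowtie}'s style of coordinate bookkeeping, or a short direct argument, is used.) This is a Ramsey-type statement about $2$-colorings of the points of the projective space $PG(5, 2)$ with no monochromatic line, and the claim is that $PG(5,2)$ has no such coloring — equivalently, that the ``line Ramsey number'' forces a monochromatic line once the ambient dimension reaches $6$, and $n = 6$ is tight.

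Next I would try to prove the non-existence by a reduction argument before resorting to a computer. Suppose $c$ is such a coloring. The red vectors cannot contain a $2$-dimensional subspace's worth of nonzero vectors, and neither can the blue ones; so each color class is a \emph{cap} in $PG(5,2)$ in the sense of containing no full line — but in $\mathbb{F}_2$ a set containing no line through the origin's structure is exactly a set $S$ with $S \cap (S \oplus S) = \emptyset$, a sum-free-type condition. The counting pressure is severe: a subset of $\mathbb{F}_2^6 \setminus\{0\}$ that meets no $2$-dimensional subspace in all three of its nonzero points can have size at most some bound $m$, and we would need $m + m \ge 63$, which should already be contradictory once $m$ is pinned down (the relevant extremal number for ``no three vectors summing to zero'' in $\mathbb{F}_2^6$ is known to be small relative to $63$). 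If a clean extremal bound closes the gap, that is the whole proof; if the bound is just barely too weak, one refines by analyzing the structure of near-extremal color classes.

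The main obstacle I anticipate is precisely this last step: the pigeonhole/extremal bound may not be quite strong enough on its own, because the two color classes can overlap in how they ``use up'' subspaces, and the extremal configurations in $\mathbb{F}_2^6$ for the cap-type condition are rigid but not unique, so a careful case split on the structure of the red class (up to the action of $GL(6,2)$) may be unavoidable. This is almost certainly why the authors flag that they only have a computer-aided proof of the tight $n = 6$ statement: the finite check over $GL(6,2)$-orbits of candidate colorings, while conceptually routine, is large. My fallback, matching Theorem~\ref{thm:upperx}, would be to prove only the weaker non-tight bound — some explicit $n$ suffices — which follows immediately from the Graham–Rothschild theorem (Theorem~\ref{thm:g-r}) applied with $A = \{\pm1\}$, $G$ the sign-flip group, $k = 1$, $t = 2$, $c = 2$, since a monochromatic $2$-parameter set in that setting is exactly a monochromatic planar $K_4$; this costs nothing in the final $2 \uparrow\uparrow\uparrow 6$ estimate.
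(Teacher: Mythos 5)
Your reduction to a finite problem is based on a miscount of the parallel classes, and this breaks the whole plan. Under the ``parallel edges agree'' hypothesis, the color of an edge of the complete graph on $\{\pm1\}^6$ is determined by its direction $a \in \{-1,0,1\}^6 \setminus \{0\}$ up to sign, giving $(3^6-1)/2 = 364$ classes (this is exactly why the paper's SAT instance has $364$ variables), \emph{not} by the set of coordinates in which the endpoints differ, which would give only $63$ classes. Two edges with the same support need not be parallel: in a $2$-dimensional subcube the two diagonals differ in the same two coordinates but are perpendicular, and they lie in different parallel classes. Relatedly, the six edges of a planar $K_4$ fall into \emph{four} direction classes $a$, $b$, $a+b$, $a-b$ for two disjointly supported directions $a,b$, so the condition to rule out is a monochromatic quadruple $\{a,b,a+b,a-b\}$, not a monochromatic projective line $\{v_1,v_2,v_1\oplus v_2\}$ in $\mathbb{F}_2^6$. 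Your $PG(5,2)$/cap formulation is therefore a different (and much easier) problem; indeed, if it were the right formulation, the Fano-plane argument would already force a monochromatic planar $K_4$ in dimension $3$, contradicting the fact that the bound $n=6$ is tight (the paper notes that for $n \le 5$ there are colorings with no monochromatic planar $K_4$). So the proposed sum-free/extremal counting in $\mathbb{F}_2^6$ cannot be repaired into a proof of this lemma; any finite check has to be run on the $364$-class problem with the four-term constraint, which is precisely the SAT computation the paper performs (it has no human proof of the $n=6$ statement).

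Your fallback is also not the paper's, and it does not deliver what you claim. Applying Theorem~\ref{thm:g-r} with $A=\{\pm1\}$, $k=1$, $t=2$ just reasserts the finiteness of the original Graham--Rothschild quantity and comes with no usable explicit bound, so it would not yield the $2\uparrow\uparrow\uparrow 6$ estimate. The paper's weaker substitute, Lemma~\ref{lem:uncomputer}, instead works directly with direction classes: it uses Folkman's finite unions theorem (made quantitative via Shelah's bound on $\HJ(2,2,d)$) to extract disjointly supported directions $a,b,c,d$ whose $15$ subset-sums are monochromatic, and then a short case analysis on $a-b$, $c-d$, $a-b\pm(c-d)$ produces a monochromatic planar $K_4$, giving the explicit value $d = 2\uparrow\uparrow 18$ needed for Theorem~\ref{thm:upperx} and the final bound.
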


Unfortunately, we do not have a proof of this lemma. However, with the parallel edge assumption, the coloring problem can be formulated as a SAT instance with $364$ variables; a computerized search showed that no solutions exist.

It is possible, however, to prove a weaker version of the lemma.

\begin{lem}
\label{lem:uncomputer}
Let $n = 2 \uparrow \uparrow 18$, and suppose the cube $\{\pm1\}^n$ is $2$-colored so that all parallel edges receive the same color. Then the cube contains a monochromatic planar $K_4$.
\end{lem}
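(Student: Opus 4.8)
The plan is to recast the hypothesis purely in terms of directions and then feed it into the Hales--Jewett/tic-tac-toe machinery. Since parallel edges always get the same colour, the colour of an edge of $\{\pm1\}^n$ depends only on its direction, i.e.\ on the associated element of $\{-1,0,1\}^n\setminus\{0\}$ taken modulo the simultaneous sign flip; write $\chi$ for this induced $2$-colouring of directions. A short affine-dependence computation shows that four coplanar points of a hypercube always form a $2$-dimensional subcube, so a monochromatic planar $K_4$ is exactly a choice of disjoint nonempty $B_1,B_2\subseteq[n]$ and sign patterns $q_1\in\{\pm1\}^{B_1}$, $q_2\in\{\pm1\}^{B_2}$ for which the four directions --- $q_1$ on $B_1$ and $0$ elsewhere; $q_2$ on $B_2$ and $0$ elsewhere; $q_1$ on $B_1$ and $q_2$ on $B_2$; and $q_1$ on $B_1$ and $-q_2$ on $B_2$ --- all get the same $\chi$-colour. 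Identifying $\{-1,0,1\}$ with $[3]$ (so the simultaneous sign flip becomes the tic-tac-toe involution $x\mapsto 4-x$), absorbing $q_1,q_2$ into twists, and using that $\chi$ is invariant under this flip, one checks that this is precisely the assertion that $\chi$, viewed as a $2$-colouring of $[3]^n$, admits a monochromatic $2$-dimensional tic-tac-toe subspace \emph{all of whose fixed coordinates equal $2$}: four of the nine points of such a subspace are exactly the four edge-directions above, and the remaining points are their sign flips or the all-$2$ word.

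So the task is to show that every $2$-colouring of $[3]^n$ with $n=2\uparrow\uparrow18$ contains a monochromatic $2$-dimensional tic-tac-toe subspace having no fixed coordinate equal to $1$ or $3$. I would build such a subspace level by level, at each level applying a Ramsey-type theorem to fold one more family of directions into a single colour class while passing to a large sub-collection of coordinates. First, at least half of the $n$ axis directions share a colour $X$, so restrict to that set of coordinates. Next, using ordinary graph Ramsey, pass to a further large set of coordinates on which all two-coordinate ``diagonal'' and ``anti-diagonal'' directions also lie in $X$; then, iterating, the three-coordinate directions and beyond are absorbed using Hales--Jewett/Graham--Rothschild and finite-unions (Folkman-type) results. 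The point of proceeding this way is that the monochromatic structure stays generated by pairwise disjoint blocks with \emph{no} leftover fixed part, so the final $2$-dimensional subspace automatically has its fixed coordinates at $2$.

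The main obstacle is exactly keeping the structure padding-free. Applied naively, Hales--Jewett or Graham--Rothschild hands back subspaces with coordinates pinned to $1$ or $3$, and such a coordinate can lie neither in a parametrised block of a planar $K_4$ (its direction-entry would then have to vary) nor in the fixed region (its direction-entry would then have to be $0$); indeed for small $n$ there genuinely are colourings with no admissible subspace, so no one-shot argument can work. One must therefore simultaneously keep the growing monochromatic structure free of fixed coordinates \emph{and} keep it closed under the sign reversal that the fourth edge-direction demands --- two requirements that interact, and reconciling them through the iteration (choosing the parameters at each level so the next level still has room to succeed) is the real content of the proof. This is precisely the step that, for the optimal dimension $n=6$, the authors can only settle with a SAT solver (Lemma~\ref{lem:computer}).

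For the bound it then suffices to observe that only a bounded number of Ramsey theorems are nested, and that by Shelah's primitive recursive estimates --- the analysis behind Lemma~\ref{lem:426} --- each contributes at most a tower of $2$'s of some fixed finite height. The composite tower has height comfortably below $18$, so $n=2\uparrow\uparrow18$ is amply large enough, which is all the lemma claims.
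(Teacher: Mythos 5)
Your reduction to directions, and the observation that a monochromatic planar $K_4$ amounts to finding two disjointly supported directions $a,b$ with $a$, $b$, $a+b$, $a-b$ all one colour, is the right starting point (modulo the small slip that the all-$2$ word of your proposed tic-tac-toe subspace corresponds to the zero vector, which is not an edge, so your ``precisely'' is not quite an equivalence). But the core of your plan --- iterating Ramsey/Hales--Jewett steps so that the axis directions, the two-coordinate ``diagonal'' and ``anti-diagonal'' directions, and so on all end up in a \emph{single} colour class $X$ --- cannot work, and you in effect concede this yourself: no restriction to a subset of coordinates forces the ``difference'' directions (those containing both $+1$ and $-1$ entries) to share the colour of the ``sum'' directions, and you leave exactly this reconciliation (``keep the structure closed under the sign reversal'') unproved, mislabelling it as the step that only a SAT solver can settle. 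The SAT computation is needed only for the tight dimension $6$; for $n=2\uparrow\uparrow 18$ the paper has a short human proof whose key idea your proposal is missing.

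That idea is a two-colour case analysis rather than a one-colour construction. Using Folkman's finite unions theorem --- made quantitative by iterating Shelah's bound on $\HJ(2,2,d)$, which is where $2\uparrow\uparrow 18$ actually comes from: one defines $n(k)$ so that $k$ directions exist whose subset sums have colour determined by the largest index, shows $n(k+1)\le \HJ(2,2,n(k)) \le 2^{2^{2^{n(k)}}}$, takes $n(7)\le 2\uparrow\uparrow 18$, and extracts four of the seven directions sharing a colour by pigeonhole --- one obtains $a,b,c,d\in\{0,1\}^n$ with disjoint supports all of whose $15$ nonempty subset sums are, say, red. If any of $a-b$, $c-d$, $a-b+c-d$, $a-b-c+d$ is red, a red planar $K_4$ appears (e.g.\ $\{a,b,a+b,a-b\}$, or $\{a+c,\,b+d,\,a+b+c+d,\,a-b+c-d\}$); otherwise all four of these differences are blue, and since $(a-b)+(c-d)=a-b+c-d$ and $(a-b)-(c-d)=a-b-c+d$, they themselves form a blue planar $K_4$. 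Without this pivot to the second colour, your level-by-level scheme has no way to handle the fourth direction $a-b$, and your concluding bound count (``a composite tower of height comfortably below $18$'') is not anchored to any actual iteration.
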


\begin{proof}
An equivalence class of parallel edges in the cube $\{\pm1\}^n$ can be described by a \emph{direction} $a \in \{-1,0,1\}^n$, corresponding to all possible edges $(x, x+a)$; $a$ and $-a$ represent the same direction. 

We will define addition and subtraction of directions componentwise. In order for $a+b$ and $a-b$ to also be directions, we require that $a$ and $b$ have disjoint support: for all $1 \le i \le n$, at most one of $a_i$ and $b_i$ are nonzero. (Otherwise, we risk that $a_i \pm b_i \not\in \{-1, 0, 1\}$.)

A monochromatic planar $K_4$ is obtained whenever, for two directions $a$ and $b$ with disjoint support, $a$, $b$, $a+b$, and $a-b$ are all the same color.

First consider only the directions in $\{0,1\}^n \subset \{-1,0,1\}^n$. By using Folkman's finite unions theorem (see, for example, p.\ 82 in \cite{graham1990ramsey}) we can choose four directions $a$, $b$, $c$, and $d$ among these with the following properties:
\begin{itemize}
\item $a$, $b$, $c$, and $d$ have disjoint support.
\item The 15 directions $a, \dots, d, a+b, a+c, \dots, c+d, a+b+c, \dots, b+c+d, a+b+c+d$ are the same color (say, red).
\end{itemize}

If $a-b$ is red, then $\{a, b, a+b, a-b\}$ determine a red planar $K_4$, so assume $a-b$ is blue. Similarly, if $c-d$ is red, then $\{c, d, c+d, c-d\}$ determine a red planar $K_4$, so assume $c-d$ is blue.

If $a-b+c-d$ is red, then $\{a+c, b+d, a+b+c+d, a-b+c-d\}$ determine a red planar $K_4$, so assume $a-b+c-d$ is blue. Similarly, if $a-b-c+d$ is red, then $\{a+d, b+c, a+b+c+d, a-b-c+d\}$ determine a red planar $K_4$, so assume $a-b-c+d$ is blue.

But now $\{a-b, c-d, a-b+c-d, a-b-c+d\}$ determine a blue planar $K_4$, and we have what we wanted.

It remains to check that the dimension $n$ required by the finite unions theorem in this case is not too large. We rely on the second proof outlined in \cite{graham1990ramsey}. 

Let $n(k)$ be the dimension needed to obtain $k$ directions $a^1, \dots, a^k$ with the following properties: for each nonempty $I \subseteq [k]$, the color $\sum_{i \in I} a^i$ is determined only by $\max\{I\}$. As a base case, $n(1) = 1$, since then any direction suffices.

To go from $n(k)$ to $n(k+1)$, let $n = \HJ(2,2, n(k))$ and choose a monochromatic $n(k)$-dimensional combinatorial subspace of $\{0,1\}^n$. This can be described by directions $b^0, \dots, b^{n(k)} \in \{0,1\}^n$ (with disjoint support) such that for all $I \subseteq [n(k)]$, $b^0 + \sum_{i\in I} b^I$ is the same color (say, red).

The set of all possible sums of $b^1, \dots, b^{n(k)}$ is isomorphic to $\{0,1\}^{n(k)}$, so we can find $k$ directions $a^1, \dots, a^k$, which are sums of some of the $b^i$ and have the property we want. Furthermore, let $a^{k+1} = b^0$. Then for all nonempty $I \subseteq [k+1]$, the sum $\sum_{i \in I} a^i$ is determined by $\max\{I\}$: this is true by the inductive hypothesis if $\max\{I\} \le k$, and if $\max\{I\} = k+1$, the sum lies in the combinatorial subspace we found, and is red. Therefore $n(k+1) \le n$.

By the bound in \cite{shelah1988primitive}, $\HJ(2,2,d) \le 2^{2^{2d}}$, so $n(k+1) \le 2^{2^{2n(k)}} \le 2^{2^{2^{n(k)}}}$. Since $n(1) = 1 = 2 \uparrow \uparrow 0$, $n(7) \le 2 \uparrow \uparrow 18$.

Finally, if we take $n = n(7)$, we can find seven directions $a^1, \dots, a^7$ as above. Choose four of these that are the same color; then because $\sum_{i \in I} a^i$ has the color of $a^{\max\{I\}}$, all their sums will share that color, and we can use them above to obtain a monochromatic planar $K_4$.
\end{proof}

\subsection{The upper bound}

\begin{proof}[Proof of Theorem~\ref{thm:upper}]

Let $n = \TTT(4,2,d)$, where $d$ is either $6$ or $2\uparrow \uparrow 18$, depending on whether Lemma~\ref{lem:computer} or Lemma~\ref{lem:uncomputer} is used. Let $Q$ be the cube $\{\pm1\}^{n+1}$. Given a $2$-coloring of the edges of $Q$, we consider just the edges from $Q^-$ to $Q^+$, and apply $\Phi$ to them to get a coloring of $[4]^n$. This coloring must contain a monochromatic $d$-dimensional tic-tac-toe space; by Lemma~\ref{lem:hyperbowtie}, its preimage in $Q$ is a $d$-dimensional monochromatic hyperbowtie.

From now on, we will look only at the $(d+1)$-dimensional subcube containing this hyperbowtie. What we know about this cube is that all edges which change the first coordinate (which we'll call the ``middle" of the cube) are colored the same color, which may as well be red. The remaining edges are contained in one of two $d$-dimensional cubes: the ``top" and ``bottom".

We reduce the problem of finding a monochromatic planar $K_4$ in this subcube to Lemma~\ref{lem:computer}. We color the edges of $\{\pm1\}^d$ as follows:
\begin{itemize}
\item[(1)] An equivalence class of parallel edges is colored blue, if the corresponding edges on the top are all colored blue.

\item[(2)] An equivalence class of parallel edges is colored red, if the corresponding edges on the bottom are all colored blue.

\item[(3)] If neither of these occurs, then there is a pair of parallel edges, one on the top and one on the bottom, which are colored red. Together with four edges in the middle, which are also red, they form a monochromatic planar $K_4$.
\end{itemize}

We are done if (3) holds for some equivalence class of parallel edges. Otherwise, by Lemma~\ref{lem:computer} or Lemma~\ref{lem:uncomputer}, the coloring we obtain contains a monochromatic planar $K_4$. If it is blue, then the corresponding $K_4$ on the top is monochromatic blue. If it is red, then the corresponding $K_4$ on the bottom is monochromatic blue.
\end{proof}

\section{Monochromatic planar squares}

For a vertex $v$ of the $n$-dimensional cube and $1 \le i \le n$, let $v\oplus i$ denote the vertex obtained by flipping the $i$-th coordinate of $v$. Whenever we refer to length or distance between two vertices, it will be Hamming distance: the number of coordinates in which the two coordinates differ.

\begin{lem}
\label{lem:sq-adj}
For $n\ge 4$, in any edge-coloring of the $n$-dimensional cube, at least $\frac12 - \frac1{2\floor{\frac n2} - 2}$ of all right angles formed by edges of length $2$ are monochromatic.
\end{lem}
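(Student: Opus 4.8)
The plan is to localize the count at a single vertex and then reduce to an elementary fact about $2$-colorings of a complete graph. Fix a vertex $v$ of the $n$-dimensional cube. Every edge of length $2$ incident to $v$ has the form $\{v,\, v\oplus i\oplus j\}$ for a $2$-subset $\{i,j\}\subseteq[n]$, and conversely; moreover two such edges $\{v,\,v\oplus i\oplus j\}$ and $\{v,\,v\oplus k\oplus l\}$ meet at a right angle precisely when $\{i,j\}$ and $\{k,l\}$ are disjoint (if they share exactly one coordinate the two edge–vectors are not orthogonal, and if they share both coordinates the two edges coincide). Hence the right angles with apex $v$ are in bijection with the edges of the Kneser graph $K(n,2)$ on vertex set $\binom{[n]}{2}$ in which two pairs are adjacent iff disjoint — there are $3\binom n4$ of them — and the edge-coloring of the cube restricts to a $2$-coloring of the \emph{vertices} of $K(n,2)$ under which a right angle at $v$ is monochromatic iff the corresponding edge of $K(n,2)$ is monochromatic. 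Since the total number of right angles is $2^n\cdot 3\binom n4$, it suffices to prove: in every $2$-coloring of $V(K(n,2))$, at least a $\bigl(\tfrac12-\tfrac1{2\floor{n/2}-2}\bigr)$-fraction of the edges of $K(n,2)$ are monochromatic. This bound then holds at every apex, hence overall.

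To prove this Kneser-graph statement I would cover $E(K(n,2))$ by cliques. A clique of $K(n,2)$ is exactly a family of pairwise disjoint $2$-subsets of $[n]$, i.e.\ a matching of $K_n$; the maximum cliques are the maximum matchings, each of size $\omega:=\floor{n/2}$, so each is a copy of $K_\omega$ sitting inside $K(n,2)$. Every edge of $K(n,2)$ (a pair of disjoint $2$-sets, using exactly $4$ elements of $[n]$) extends to a maximum matching, so these $K_\omega$'s cover all edges. Because $S_n$ acts transitively both on the edges of $K(n,2)$ and on the maximum matchings of $[n]$, each edge lies in the same number of these cliques; equivalently, if $\Pi$ is a uniformly random maximum matching, then $\Pr[e\in E(K_\Pi)]$ is a constant $p$ independent of the edge $e$, and summing over all edges gives $p\cdot 3\binom n4=\binom\omega2$. (If one prefers explicit numbers, $e$ lies in $(n-5)!!$ perfect matchings when $n$ is even and in $(n-4)!!$ maximum matchings when $n$ is odd.)

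The last ingredient is the trivial bound on $K_\omega$: in any $2$-coloring of $V(K_\omega)$ the number of bichromatic edges is the size of a cut of $K_\omega$, hence at most $\floor{\omega^2/4}\le\omega^2/4=\bigl(\tfrac12+\tfrac1{2\omega-2}\bigr)\binom\omega2$, so at least a $\bigl(\tfrac12-\tfrac1{2\omega-2}\bigr)$-fraction of the $\binom\omega2$ edges of $K_\omega$ are monochromatic. Applying this to each clique $K_\Pi$ and averaging over $\Pi$ — using that each edge of $K(n,2)$ is covered the same number of times, so that the monochromatic fraction of $K(n,2)$ equals a weighted average of the monochromatic fractions of the $K_\omega$'s — yields that at least a $\bigl(\tfrac12-\tfrac1{2\omega-2}\bigr)$-fraction of the $3\binom n4$ edges of $K(n,2)$ are monochromatic, as required. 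For $n=4,5$ we have $\omega=2$ and the asserted fraction is $0$, so nothing is claimed; the argument has content only for $n\ge6$.

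I expect the clique-cover step to be the only place requiring care: one must check that the maximum cliques of $K(n,2)$ are exactly the maximum matchings of $[n]$ and that they cover each edge a constant number of times (cleanest via the $S_n$-symmetry, or via the explicit double-factorial counts above). The identification of right angles at a vertex with edges of $K(n,2)$ and the max-cut estimate for $K_\omega$ are routine. One minor bookkeeping point: when $\omega$ is odd the true minimum monochromatic fraction of $K_\omega$ is $\tfrac12-\tfrac1{2\omega}$, which is strictly larger than the stated $\tfrac12-\tfrac1{2\omega-2}$, so the bound in the lemma remains valid (and is simply not tight in that case); one could also note that a standard eigenvalue bound for the max cut of $K(n,2)$, whose least eigenvalue is $-(n-3)$ and degree $\binom{n-2}{2}$, gives the cleaner uniform estimate $\tfrac12-\tfrac1{n-2}$, which again implies the stated inequality.
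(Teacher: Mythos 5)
Your proof is correct and is essentially the paper's own argument in different clothing: the paper fixes a random vertex $v$ and permutation $\pi$, which is exactly your uniformly random maximum clique (maximum matching of coordinates) of $K(n,2)$, applies the same max-cut count on a clique of size $\floor{n/2}$, and averages using the same symmetry. Your Kneser-graph and clique-cover phrasing, the double-factorial multiplicities, and the eigenvalue aside are all fine but add nothing beyond the paper's two-line averaging step.
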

\begin{proof}
Choose a vertex $v$ of the $n$-dimensional cube, and a permutation $\pi$ of $\{1, \dots, n\}$.

Let $k = \floor{\frac n2}$; for $1 \le i \le k$, let $w_i = v \oplus \pi(2i-1) \oplus \pi(2i)$. Then the edges $(v,w_1), \dots, (v,w_k)$ are mutually perpendicular and have length $2$. There are ${k \choose 2}$ pairs of edges in this set; the number of monochromatic pairs is minimized if $\frac k2$ of the edges are red, and $\frac k2$ are blue, for a total of $2 {k/2 \choose 2}$ monochromatic pairs.

When $k \ge 2$, the ratio of these is $\frac12 - \frac1{2(k-1)}$, which is the proportion of monochromatic pairs among these edges. By averaging over all choices of $v$ and $\pi$, we obtain the same proportion over the entire cube.
\end{proof}

\begin{lem}
\label{lem:sq-opp}
For $n \ge 4$, in any edge-coloring of the $n$-dimensional cube, at least $\frac12 - \frac1{2(n-3)}$ of all pairs of parallel edges of length $2$, which are also at distance $2$ from each other, are monochromatic.
\end{lem}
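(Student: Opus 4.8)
The plan is to remove the ``edge'' language entirely: group the length-$2$ edges into parallel classes, observe that each class is just a copy of a lower-dimensional cube carrying an induced vertex-colouring, and reduce to a clean counting statement on that cube, which I would then prove by the same convexity-and-averaging idea used for Lemma~\ref{lem:sq-adj}.

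First I would fix the direction of the edges. A length-$2$ edge flips a pair of coordinates $\{i,j\}$, and, viewing $\{\pm1\}^n\subset\R^n$ with standard basis $e_1,\dots,e_n$, its displacement is $\pm2(e_i+e_j)$ or $\pm2(e_i-e_j)$ according to whether the coordinates $i$ and $j$ of an endpoint are equal or not. Since $(e_i+e_j)\perp(e_i-e_j)$, two length-$2$ edges are parallel exactly when they flip the same pair $\{i,j\}$ \emph{and} fall in the same one of these two ``orientation classes''; in particular no parallel pair straddles two classes, and same-direction edges of opposite orientation are not parallel at all. Each of the $n(n-1)$ classes has $2^{n-2}$ edges, and recording the $n-2$ coordinates other than $i$ and $j$ of the appropriate endpoint identifies the class with the vertex set of an $(n-2)$-dimensional cube; under this identification the edge-colouring becomes a $2$-colouring of $\{\pm1\}^{n-2}$, and a pair of parallel length-$2$ edges at distance $2$ becomes a pair of vertices at Hamming distance $2$ (the parallel matching of endpoints is the distance-$2$ one; the anti-parallel matching is at distance $4$). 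All $n(n-1)$ classes contribute the same number of such pairs, so it suffices to prove: for $m=n-2\ge2$, in every $2$-colouring of $\{\pm1\}^m$ at least $\frac12-\frac1{2(m-1)}$ of the pairs of vertices at Hamming distance $2$ are monochromatic.

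For this sub-statement I would average over the vertices $w$ of $\{\pm1\}^m$. The $m$ neighbours $w\oplus1,\dots,w\oplus m$ of $w$ are pairwise at Hamming distance $2$, so they contribute $\binom m2$ distance-$2$ pairs, and --- exactly as in the proof of Lemma~\ref{lem:sq-adj} --- the number of these that are monochromatic is minimised by a balanced colouring and is therefore at least a proportion $\frac{m-2}{2(m-1)}=\frac12-\frac1{2(m-1)}$ (with equality when $m$ is even, and strictly more when $m$ is odd, so the bound holds for all $m\ge2$). Now every pair $\{u,u'\}$ of vertices at Hamming distance $2$ is the neighbour-pair of exactly two vertices $w$, namely the two common neighbours of $u$ and $u'$ in the cube graph, so summing the previous estimate over all $2^m$ vertices $w$ counts each monochromatic distance-$2$ pair exactly twice. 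Dividing by $2$ and comparing with the total number $2^{m-1}\binom m2$ of distance-$2$ pairs yields the stated proportion, and then averaging over the $n(n-1)$ equal-sized orientation classes proves the lemma.

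The individual steps are short; the two places that need care are the bookkeeping. The first is the reduction claim that ``parallel at distance $2$'' corresponds precisely to ``Hamming distance $2$ within one orientation class'' --- one must be sure that same-direction edges of opposite orientation really are excluded, and that the distance between two parallel edges is read off along the parallel matching of their endpoints, since the anti-parallel matching would instead produce distance-$4$ pairs. The second is checking that a distance-$2$ pair of vertices has exactly two common neighbours in the cube, which is what makes the vertex-averaging an exact double count rather than an inequality in the wrong direction. Everything else is the balanced-colouring convexity estimate already carried out in Lemma~\ref{lem:sq-adj}.
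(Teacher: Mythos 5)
Your proposal is correct and is essentially the paper's own argument in different packaging: your family of the $m=n-2$ neighbours of a vertex $w$ in the reduced $(n-2)$-cube is exactly the paper's family of edges $(v_i,w_i)$ with $v_i=v\oplus\pi(i)$, $w_i=v_i\oplus\pi(n-1)\oplus\pi(n)$, and both proofs finish with the same balanced-colouring convexity bound $\frac12-\frac1{2(k-1)}$ for $k=n-2$ followed by averaging. The only real difference is bookkeeping: you organize the edges into parallel classes and make the averaging an exact double count (each Hamming-distance-$2$ pair has exactly two common neighbours), whereas the paper simply averages over all choices of $(v,\pi)$ by symmetry.
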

\begin{proof}
Choose a vertex $v$ of the $n$-dimensional cube, and a permutation $\pi$ of $\{1, \dots, n\}$.

Let $k=n-2$; for $1 \le i \le k$, let $v_i = v \oplus \pi(i)$ and $w_i = v_i \oplus \pi(n-1) \oplus \pi(n)$. Then the edges $(v_1,w_1), \dots, (v_k, w_k)$ are all parallel, have length $2$, and are at distance $2$ from each other. There are ${k \choose 2}$ pairs of edges in this set; the number of monochromatic pairs is minimized if $\frac k2$ of the edges are red, and $\frac k2$ are blue, for a total of $2 {k/2 \choose 2}$ monochromatic pairs.

When $k \ge 2$, the ratio of these is $\frac12 - \frac1{2(k-1)}$, which is the proportion of monochromatic pairs among these edges. By averaging over all choices of $v$ and $\pi$, we obtain the same proportion over the entire cube.
\end{proof}

\begin{lem}
\label{lem:sq-odd}
For $n \ge 5$, in any edge-coloring of the $n$-dimensional cube, at most $\frac{14}{15}$ of all $2\times 2$ squares have an odd number of red edges.
\end{lem}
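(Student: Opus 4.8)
\subsection*{Proof proposal}

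The plan is to exhibit fifteen specific $2\times 2$ squares whose sides cancel modulo $2$ — every length‑$2$ edge is a side of an even number of them — and then average this configuration over the symmetry group of the cube. Once such a set $\mathcal{S}=\{S_1,\dots,S_{15}\}$ is in hand the argument is short: for any edge‑coloring, the quantity $\sum_{i=1}^{15}(\text{number of red sides of }S_i)$ counts each red edge an even number of times, so it is $\equiv 0\pmod 2$; since $15$ is odd, the fifteen squares cannot all have an odd number of red sides, so at most $14$ do. The group $\mathrm{Aut}(\{\pm1\}^n)$ of coordinate permutations and sign flips acts transitively on $2\times 2$ squares, so its translates $g\cdot\mathcal S$ form a family of fifteen‑element ``frames'', each with at most $14$ odd squares, and each $2\times2$ square lies in equally many frames; averaging over $g$ — just as in Lemmas~\ref{lem:sq-adj} and~\ref{lem:sq-opp} one averages over a vertex and a permutation — then yields that at most $\tfrac{14}{15}$ of all $2\times2$ squares are odd.

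The work is in constructing $\mathcal S$. A $2\times 2$ square corresponds to an unordered pair $\{P,Q\}$ of disjoint $2$‑element coordinate sets together with a coset of the plane $W=\langle\mathbf 1_P,\mathbf 1_Q\rangle\subseteq\mathbb F_2^n$, and an edge of ``direction'' $R$ (a $2$‑element set) is a side of that square exactly when $R\in\{P,Q\}$ and the edge's endpoints lie in the chosen coset. Restricting to five coordinates, the admissible pairs $\{P,Q\}$ are precisely the fifteen edges of the Petersen graph $K(5,2)$ — which is where the number $15$, and the hypothesis $n\ge 5$, come from, since the gadget genuinely requires all five coordinates. I would take one square for each such plane $W$. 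For a fixed direction $R$, the squares having a side of direction $R$ are the three chosen squares attached to the three planes through $R$ (the three Petersen edges at the vertex $R$); so the cancellation condition decouples into, for each of the ten directions $R$, the single requirement that those three cosets have empty symmetric difference. Since the three planes through $R$ span a common $3$‑space $V_R$ and are exactly the three subgroups of index $2$ in $V_R$ containing $\langle\mathbf 1_R\rangle$, a direct computation in $V_R/\langle\mathbf 1_R\rangle\cong\mathbb F_2^2$ shows each local requirement has solutions; it then remains to choose all fifteen cosets so that the ten local requirements hold at once, a small linear system over $\mathbb F_2$ indexed by the Petersen graph.

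The main obstacle is exactly this last point: producing the fifteen squares explicitly and checking that the ten coset conditions are jointly satisfiable. (The restriction $n\ge 5$ is sharp in the expected way: for $n=4$ the twelve $2\times 2$ squares have pairwise disjoint side‑sets, so one can color so that all twelve are odd, and the statement fails.)
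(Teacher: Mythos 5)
Your outline is exactly the paper's strategy: a gadget of fifteen $2\times 2$ squares supported on five coordinates in which every length-$2$ edge occurs as a side an even number of times, then the parity argument (odd total vs.\ even contribution per edge) to conclude at most $14$ of the $15$ are odd, then averaging over the cube's symmetries (the paper averages over a vertex $v$ and a permutation $\pi$, which amounts to the same thing as your group averaging). The parity step and the averaging step in your write-up are fine, and your count of the fifteen Petersen-edge directions and of the three squares per direction $R$ is correct.

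The genuine gap is the one you flag yourself: the gadget is never actually produced. You reduce its existence to the joint satisfiability of ten coset conditions (one per direction $R$) but do not solve that system, and without a witness the lemma is not proved. The system is in fact satisfiable, and the explicit solution is very simple --- simpler than the linear-algebra framework suggests. Fix $v$ and five coordinates $\pi(1),\dots,\pi(5)$, and take the ten vertices $v \oplus \pi(i) \oplus \pi(j)$, $1 \le i < j \le 5$. For each of the fifteen pairs $\{P,Q\}$ of disjoint $2$-subsets of $\{1,\dots,5\}$, take the square with vertex set $\{\,v \oplus \pi(a) \oplus \pi(b) : a \in P,\ b \in Q\,\}$; in your language this fixes, for each plane $\langle \mathbf{1}_P,\mathbf{1}_Q\rangle$, the coset containing those four points. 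A length-$2$ edge between two of the ten vertices joins $v \oplus \pi(i)\oplus \pi(j)$ to $v \oplus \pi(i) \oplus \pi(k)$ and lies in exactly the two squares with $Q=\{j,k\}$ and $P$ one of the two $2$-subsets containing $i$ and disjoint from $\{j,k\}$; so each of the $30$ edges is used exactly twice, which is precisely your cancellation condition (each of your ten local ``triangle'' constraints is met automatically). With this one concrete choice inserted, your proof is complete and coincides with the paper's; your side remark about $n=4$ (the twelve squares have disjoint side-sets, so all can be made odd) is correct but not needed.
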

\begin{proof}
Choose a vertex $v$ of the $n$-dimensional cube, and a permutation $\pi$ of $\{1, \dots, n\}$.

Let $v_1, \dots, v_{10}$ be $v \oplus \pi(i) \oplus \pi(j)$, for $1 \le i < j \le 5$. Using only edges of length $2$ between these vertices, $15$ squares can be formed, which together use each edge exactly twice.

Assume for the sake of contradiction that these edges are colored so that all $15$ squares have an odd number of red edges. Represent the two colors, red and blue, by $1$ and $0$, and let the sum of a square be the sum of the colors of its edges. The sums of all $15$ squares must be odd, so adding up all 15 sums, we also get an odd number. However, each edge is used twice and therefore contributes an even number to this total; a contradiction. 

Therefore at most $\frac{14}{15}$ of these squares have an odd number of red edges. By averaging over all choices of $v$ and $\pi$, we obtain the same proportion over the entire cube.
\end{proof}

\begin{proof}[Proof of Theorem~\ref{thm:squares}] There are four types of colorings of $2 \times 2$ squares, up to symmetry and interchanging the two colors:
$$\img{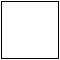} \quad \img{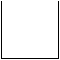} \quad \img{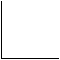} \quad \img{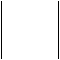}$$

For $n \ge 5$, fix an edge-coloring of the $n$-dimensional cube. We will use the four symbols above to denote the proportions of $2\times 2$ squares of each type. By applying the lemmas, we can write the following system of inequalities:
\begin{align}
\img{sq4.pdf} + \img{sq3.pdf} + \img{sq2adj.pdf} + \img{sq2opp.pdf} &= 1 \\
\img{sq4.pdf} + \frac12 \cdot \img{sq3.pdf} + \frac12 \cdot \img{sq2adj.pdf} &\ge \frac12 - \frac1{2\floor{\frac n2} - 2} \\
\img{sq4.pdf} + \frac12 \cdot \img{sq3.pdf} + \img{sq2opp.pdf} &\ge \frac12 - \frac1{2(n-3)} \\
\img{sq3.pdf} &\le \frac{14}{15}.
\end{align}
Solving for \img{sq4.pdf} by taking $2 \cdot (2) + (3) - (1) - \frac12 \cdot (4)$, we get
$$2 \cdot \img{sq4.pdf} \ge \frac{1}{30} - \frac1{\floor{\frac n2} - 1} - \frac1{2(n-3)}.$$
For $n \ge 78$, the left-hand side is positive, and therefore a monochromatic $2 \times 2$ square exists.
\end{proof}

\appendix
\section{Computer Proof of Lemma~\ref{lem:computer}}

The following Mathematica code, which was used to verify Lemma~\ref{lem:computer}, will also be made available at \url{http://www.math.cmu.edu/~mlavrov/other/Graham.nb} for as long as possible. The online version includes additional code which, for $n \le 5$, will draw an edge coloring containing no monochromatic planar $K_4$.

\lstset{language=[5.2]Mathematica}
\subsection{Subroutines} Here we define several short subroutines that will be used in the next section to construct the 6-dimensional cube.

\begin{lstlisting}
(* CanonicalForm makes a canonical choice of sign
   for a vector *)
CanonicalForm[edge_] := Last[Sort[{edge, -edge}]]


(* DisjointSupportQ returns True if two vectors have
   disjoint support: in each coordinate, at most one 
   can be nonzero.
   Equivalently, the edges are perpendicular. *)
DisjointSupportQ[{edge1_, edge2_}] :=
Module[{support1, support2},
   support1 = Flatten[Position[edge1, 1 | -1]]; 
      (* which elements of edge1 are nonzero *)
   support2 = Flatten[Position[edge2, 1 | -1]]; 
      (* which elements of edge2 are nonzero *)
   Return[Intersection[support1, support2] == {}];
];


(* MakeK4FromRectangle takes two edges defining a
   rectangle as input. It returns the edges of the
   unique planar K4 containing that rectangle. *)
MakeK4FromRectangle[{a_, b_}] := 
   {a, b, CanonicalForm[a+b], CanonicalForm[a-b]};


(* NotMonochromatic defines a Boolean formula on the
   variables corresponding to some edges, which is
   True iff the edges are not all the same color. *)
NotMonochromatic[{a_, b_, c_, d_}] := 
   (x[a]~Or~x[b]~Or~x[c]~Or~x[d])
      ~And~
   Not[x[a]~And~x[b]~And~x[c]~And~x[d]]
\end{lstlisting}
\subsection{Structure} Next, we initialize variables that store the structure of the cube: the parallel edge classes and the planar $K_4$s they form.

\begin{lstlisting}
n = 6; 
   (* the dimension of the cube *)
edges = Tuples[{-1, 0, 1}, n]; 
   (* parallel edge classes are defined by whether they
      increase (+1), decrease (1) or don't change (0)
      on each coordinate *)
edges = Cases[edges, Except[{0 ..}]];
   (* we exclude the trivial edge which does not change
      any coordinates *)
edges = DeleteDuplicates[Map[CanonicalForm, edges]]; 
   (* we put each edge in canonical form, removing
      duplicates *)

rectangles = Select[Subsets[edges, {2}], DisjointSupportQ];
   (* two edges form the sides of a rectangle if they 
      change disjoint sets of coordinates *)

k4s = Map[MakeK4FromRectangle, rectangles];
   (* all planar K4s are made by the sides and diagonals
      of a rectangle *)
\end{lstlisting}

\subsection{Satisfiability} Finally, we express the coloring problem as a SAT instance, which allows us to use Mathematica's built-in commands to check that no solution exists.

\begin{lstlisting}
variables = Map[x, edges]; 
   (* for each edge we have a binary variable, True or
      False depending on the color of the edge *)
constraints = Map[NotMonochromatic, k4s]; 
   (* the constraints are simply that no K4 is 
      monochromatic *)
formula = Apply[And, constraints];

output = SatisfiabilityInstances[formula, variables] 
   (* this will attempt to find an assignment of 
      True/False values that will satisfy all constraints.
      An output of {} means that this is impossible. *)
\end{lstlisting}

Expect the code in this section to take several minutes to run.

\section{Rate of growth of Shelah's Hales-Jewett bounds}

We will make use of Knuth's up-arrow notation. Let $a \uparrow b := a^b$; then define
$$a \uparrow \uparrow b = \underbrace{a \uparrow (a \uparrow (a \uparrow \cdots \uparrow a))}_{b\; a\mathrm{'s}}.$$
Finally, define
$$a \uparrow \uparrow \uparrow  b =  \underbrace{a \uparrow \uparrow  (a \uparrow \uparrow  (a \uparrow \uparrow  \cdots \uparrow \uparrow  a))}_{b\; a\mathrm{'s}}.$$
We will use the following rules to rewrite expressions written in this notation:
\begin{itemize}
\item An expression of the form $a \uparrow a \uparrow \cdots \uparrow a$ with arbitrarily-inserted parentheses is always maximized when the parentheses are placed as in the definition of $a \uparrow\uparrow b$.

\item Therefore $(a \uparrow \uparrow b) \uparrow \uparrow c \le a \uparrow \uparrow (b\cdot c)$, by expanding and rearranging the parentheses.

\item Since $(2 \uparrow \uparrow k)^2 \le 2\uparrow \uparrow (k+1)$,  it is also true that $a \cdot (2\uparrow \uparrow k) < 2\uparrow \uparrow (k+1)$ for any $a < 2\uparrow \uparrow k$.

\item Finally, $a + (2 \uparrow \uparrow k) < 2 \cdot (2 \uparrow \uparrow k) \le 2 \uparrow \uparrow (k+1)$, for any $a < 2 \uparrow \uparrow k$.
\end{itemize}

\begin{lem}
\label{lem:up-arrow}
If $\HJ(t-1, 2, d) \le 2 \uparrow \uparrow m$, then $\HJ(t,2,d) \le 2 \uparrow\uparrow 2 \uparrow\uparrow(m+3)$.
\end{lem}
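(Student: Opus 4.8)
The plan is to bound $\HJ(t,2,d)$ by an iterated application of the Hales–Jewett theorem in the smaller alphabet $[t-1]$, reducing a $t$-letter coloring problem to a sequence of $(t-1)$-letter problems. The standard device (essentially Shelah's) is a product argument: view $[t]^n$ with $n = n_1 + n_2 + \cdots + n_r$ as a product of $r$ blocks, and use a color-focusing or induction-on-colors scheme where in each block we apply $\HJ(t-1,2,\cdot)$ to control the behavior of the coloring on one additional ``$t$-value'' coordinate. The exact mechanism I would use: freeze all but the last block, and on the coordinates of the last block, restrict attention to combinatorial lines whose moving coordinate only ever takes values in $\{t-1, t\}$; by identifying the ``active'' letter with $t-1$ or $t$ this is a $2$-coloring question on a cube, but more to the point we inductively reduce dimension by one block per application, and each block needs dimension at most $\HJ(t-1,2,\cdot)$ applied to the dimension still needed downstream, because within a block all ``non-$t$'' structure is an honest $[t-1]$-problem. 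Iterating $d$ (or a constant multiple of $d$) times gives a tower.

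Concretely, I would set up a recursion of the shape $N_{j+1} \le \HJ(t-1, 2, N_j)$ with $N_0$ a small constant (something like $d$ or $d+1$), run it for roughly $d$ steps, and conclude $n = N_d$ suffices for $\HJ(t,2,d)$. Plugging in the hypothesis $\HJ(t-1,2,e) \le 2 \uparrow\uparrow m$ whenever $e \le d$ — here the key point is that the hypothesis is stated for the fixed target dimension $d$, so I must be careful that every intermediate dimension fed into $\HJ(t-1,2,\cdot)$ stays at $d$, i.e.\ the recursion should be on \emph{number of blocks}, each of dimension exactly governed by $\HJ(t-1,2,d) \le 2\uparrow\uparrow m$, rather than on a growing dimension parameter. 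If each of the $O(d)$ blocks has size at most $2 \uparrow\uparrow m$ and there are at most, say, $2\uparrow\uparrow m$-many blocks needed (the number of blocks is itself bounded by an expression in $d$ and the number of colors, which the hypothesis $d \le \HJ(t-1,2,d) \le 2\uparrow\uparrow m$ lets us absorb), then $n \le (2\uparrow\uparrow m)\cdot(2\uparrow\uparrow m) \le 2\uparrow\uparrow(m+1)$ — too weak by itself, so in fact the blocks must be nested: block $j$ has dimension $\HJ(t-1,2,(\text{dimension of blocks } j+1,\dots))$, giving a genuine tower of height $\approx d \le 2\uparrow\uparrow m$ over base $2\uparrow\uparrow m$, hence $n \le 2\uparrow\uparrow(2\uparrow\uparrow m \cdot \text{const}) \le 2\uparrow\uparrow 2\uparrow\uparrow(m+1)$. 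Then the arithmetic rules listed just before the lemma (absorbing constants and sums into $2\uparrow\uparrow(k+1)$, turning a product of two towers into $2\uparrow\uparrow$ of a product) let me slacken $m+1$ up to $m+3$ to cover the off-by-constant factors in the block count and in the base case.

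The main obstacle I expect is the bookkeeping in the nested-block recursion: making sure that the dimension parameter handed to the inner $\HJ(t-1,2,\cdot)$ never exceeds $d$ (so the hypothesis applies verbatim), while the \emph{number} of nesting levels is what grows. This forces the recursion to be ``dimension needed for $j$ blocks $= \HJ(t-1,2,\,\text{dimension needed for }j-1\text{ blocks})$'' but with the twist that what we actually demand from the inner Hales–Jewett call is a monochromatic combinatorial space of dimension (number of remaining blocks), not of dimension $d$; so really I want $\HJ(t-1,2,D)$ for $D$ as large as the block count, and I need $D \le d$. This is fine precisely because we only need $d$-dimensional spaces at the very end and the block count can be taken $\le d$ — but verifying that a combinatorial-space-focusing argument in $[t-1]$ really does let us get away with $\le d$ blocks (rather than $\le$ something like the van der Waerden–style tower in the number of colors) is the delicate combinatorial core. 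Once that reduction is pinned down, collapsing the resulting height-$O(d)$ tower over base $2\uparrow\uparrow m$ into $2\uparrow\uparrow 2\uparrow\uparrow(m+3)$ is a routine application of the four rewriting rules stated above, using $d \le 2\uparrow\uparrow m$.
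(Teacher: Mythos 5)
Your proposal tries to re-derive the alphabet-reduction step itself, and the structure you posit is not the one that works; the paper's proof sidesteps this entirely by quoting Shelah's explicit recursion from \cite{shelah1988primitive} --- if $n = \HJ(t-1,k,d)$ then $\HJ(t,k,d) \le n\, f(n, k^{t^n})$, where $f(1,k)=k+1$ and $f(\ell+1,k)=k^{f(\ell,k)^{2\ell}}+1$ --- after which the lemma is pure up-arrow arithmetic: $f(\ell,k) \le k \uparrow\uparrow 2\ell$ for the relevant range, so $\HJ(t,2,d) \le (2\uparrow\uparrow m)\cdot\bigl[(2\uparrow\uparrow(2m+2))\uparrow\uparrow(2\uparrow\uparrow(m+1))\bigr] \le 2\uparrow\uparrow 2\uparrow\uparrow(m+3)$.

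The genuine gap in your sketch is exactly the point you flag as the ``delicate combinatorial core'' and then leave unresolved, and it cannot be resolved in the form you want: in Shelah's reduction the number of blocks is not $\le d$, it is $r = \HJ(t-1,2,d)$, because each block supplies one coordinate of a reduced $[t-1]^r$ coloring problem, and $r$ must be large enough that \emph{that} problem is guaranteed a monochromatic $d$-dimensional space. Keeping the block count at $\le d$, as you propose, leaves no Ramsey guarantee at the $(t-1)$-alphabet level at all. Moreover, the block sizes are not governed by further calls to $\HJ(t-1,2,\cdot)$: they come from a pigeonhole over the $2^{t^{(\text{remaining length})}}$ colorings of the later blocks, which is the source of the $k^{t^n}$ base and the per-level exponentiation in $f$. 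A corrected version of your count --- at most $2\uparrow\uparrow m$ blocks by the hypothesis, block sizes iterating $2^{t^{(\cdot)}}$, hence a tower of height roughly $2\cdot 2\uparrow\uparrow m$ --- does land under $2\uparrow\uparrow 2\uparrow\uparrow(m+3)$, so your closing arithmetic is fine; but as written the reduction is both unproven and structurally wrong, and that reduction is the entire content hiding behind the citation in the paper's one-line appeal to Shelah.
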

\begin{proof}
From \cite{shelah1988primitive}, we know the following: suppose $\HJ(t-1, k, d) = n$. Then $\HJ(t, k, d) \le n f(n, k^{t^n})$, where $f(\ell,k)$ is defined recursively by $f(1,k)=k+1$ and $f(\ell+1,k) = k^{f(\ell,k)^{2\ell}}+1$.

We begin by bounding $f(\ell,k)$ in up-arrow notation. Whenever we will need to find $f(\ell,k)$, we will have $k>2\ell$. Thus, we can write $f(\ell,k) < k^{f(\ell-1,k)^k}$; iterating this bound, and rearranging the parentheses, we get
$$f(\ell,k) < \underbrace{k^{k^{k^{\iddots^k}}}}_{2\ell} = k \uparrow \uparrow 2\ell.$$

Suppose $\HJ(t-1,2,d) \le 2 \uparrow\uparrow m$. It's easy to check that $\HJ(t-1,2,d) \ge t-1$, and therefore $t \le 1 + 2\uparrow\uparrow m \le 2\uparrow\uparrow (m+1).$ So we have $2^{t^n} \le 2 \uparrow\uparrow (2m+2)$. Therefore
\begin{align*}
n f(n, 2^{t^n}) &\le (2\uparrow\uparrow m) \cdot f( 2\uparrow\uparrow m, 2 \uparrow\uparrow (2m+2)) \\
&\le (2 \uparrow \uparrow m) \cdot [(2 \uparrow \uparrow (2m+2)) \uparrow \uparrow (2 \cdot 2 \uparrow \uparrow m)]\\
&\le (2 \uparrow \uparrow m) \cdot [(2 \uparrow \uparrow (2m+2)) \uparrow \uparrow (2 \uparrow \uparrow (m+1))]\\
&\le (2 \uparrow \uparrow m) \cdot [2 \uparrow \uparrow ((2m+2) \cdot 2 \uparrow \uparrow (m+1))] \\
&\le (2 \uparrow \uparrow m) \cdot [2 \uparrow \uparrow (2 \uparrow \uparrow (m+2))] \\
&\le 2 \uparrow \uparrow (1 + 2 \uparrow \uparrow (m+2)) \\
&\le 2 \uparrow \uparrow 2 \uparrow \uparrow (m+3). \qedhere
\end{align*}
\end{proof}

\begin{lem}
\label{lem:426}
$\HJ(4,2,6) < 2 \uparrow \uparrow 2 \uparrow \uparrow 2 \uparrow \uparrow 9 < 2 \uparrow \uparrow \uparrow 6$.
\end{lem}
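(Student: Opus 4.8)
The plan is to prove both inequalities by pure bookkeeping with the rewriting rules assembled at the start of this appendix, using Lemma~\ref{lem:up-arrow} as the engine. The only input beyond that lemma is a base case, and the natural one is $\HJ(2,2,6)$: Shelah's bound, quoted in the proof of Lemma~\ref{lem:uncomputer} as $\HJ(2,2,d)\le 2^{2^{2d}}$, gives $\HJ(2,2,6)\le 2^{2^{12}}=2^{4096}$. Since $2^{12}=4096<65536=2\uparrow\uparrow 4$, this is at most $2^{2\uparrow\uparrow 4}=2\uparrow\uparrow 5$, so $\HJ(2,2,6)\le 2\uparrow\uparrow 5$, i.e.\ we may take $m=5$.

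Next I would apply Lemma~\ref{lem:up-arrow} with $t=3$ and $m=5$, obtaining $\HJ(3,2,6)\le 2\uparrow\uparrow 2\uparrow\uparrow 8 = 2\uparrow\uparrow(2\uparrow\uparrow 8)$. Reading the exponent of the outer $2\uparrow\uparrow$ off, this bound is already in the form $2\uparrow\uparrow m'$ with $m'=2\uparrow\uparrow 8$, so a second application of Lemma~\ref{lem:up-arrow}, now with $t=4$, gives $\HJ(4,2,6)\le 2\uparrow\uparrow 2\uparrow\uparrow(2\uparrow\uparrow 8+3)$. Here the ``$+3$'' is harmless: since $3<2\uparrow\uparrow 8$, the last rewriting rule yields $2\uparrow\uparrow 8+3<2\uparrow\uparrow 9$, and monotonicity of $2\uparrow\uparrow(\cdot)$ in its argument upgrades this first to $2\uparrow\uparrow(2\uparrow\uparrow 8+3)<2\uparrow\uparrow 2\uparrow\uparrow 9$ and then to $\HJ(4,2,6)<2\uparrow\uparrow 2\uparrow\uparrow 2\uparrow\uparrow 9$, which is the first claimed inequality.

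For the second inequality I would simply unwind the definition of the triple arrow: $2\uparrow\uparrow\uparrow 3 = 2\uparrow\uparrow(2\uparrow\uparrow 2)=2\uparrow\uparrow 4=65536$, hence $2\uparrow\uparrow\uparrow 6 = 2\uparrow\uparrow(2\uparrow\uparrow(2\uparrow\uparrow 65536)) = 2\uparrow\uparrow(2\uparrow\uparrow(2\uparrow\uparrow(2\uparrow\uparrow\uparrow 3)))$. Comparing with $2\uparrow\uparrow 2\uparrow\uparrow 2\uparrow\uparrow 9 = 2\uparrow\uparrow(2\uparrow\uparrow(2\uparrow\uparrow 9))$ and using $9<65536$, three nested applications of monotonicity give $2\uparrow\uparrow 2\uparrow\uparrow 2\uparrow\uparrow 9<2\uparrow\uparrow\uparrow 6$, completing the proof.

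None of these steps presents a genuine obstacle; the one place that needs care is the parenthesization. I have to make sure each intermediate bound is honestly of the shape $2\uparrow\uparrow m$ that the hypothesis of Lemma~\ref{lem:up-arrow} requires, and in particular that the ``$+3$'' at each stage is absorbed into the next level of the tower (via $a+2\uparrow\uparrow k<2\uparrow\uparrow(k+1)$) rather than being allowed to push the height of the tower up, which would turn the bound into something on the order of $2\uparrow\uparrow\uparrow 7$ and defeat the estimate.
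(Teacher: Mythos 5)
Your proposal is correct and follows essentially the same route as the paper: the base case $\HJ(2,2,6)\le 2^{2^{12}}<2\uparrow\uparrow 5$, two applications of Lemma~\ref{lem:up-arrow} giving $2\uparrow\uparrow 2\uparrow\uparrow 8$ and then $2\uparrow\uparrow 2\uparrow\uparrow(2\uparrow\uparrow 8+3)$, absorption of the $+3$ into $2\uparrow\uparrow 9$, and the comparison $9<65536=2\uparrow\uparrow 2\uparrow\uparrow 2$ to conclude $<2\uparrow\uparrow\uparrow 6$. No gaps.
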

\begin{proof}
The doubly exponential bound on $\HJ(2,2,6)$ from \cite{shelah1988primitive} yields $\HJ(2,2,6) \le 2^{2^{12}} < 2\uparrow \uparrow 5$. Applying Lemma~\ref{lem:up-arrow}, we get $\HJ(3,2,6) < 2 \uparrow \uparrow 2 \uparrow \uparrow 8$, and
\begin{align*}
\HJ(4,2,6) &< 2 \uparrow \uparrow 2 \uparrow \uparrow (3 + 2 \uparrow \uparrow 8) \\
&< 2 \uparrow \uparrow 2 \uparrow \uparrow 2 \uparrow \uparrow 9 \\
&< 2 \uparrow \uparrow 2 \uparrow \uparrow 2 \uparrow \uparrow 65536 \\
&= 2 \uparrow \uparrow 2 \uparrow \uparrow 2 \uparrow \uparrow 2 \uparrow \uparrow 2 \uparrow \uparrow 2 \\
&= 2\uparrow \uparrow \uparrow 6. \qedhere
\end{align*}

With $d = 2\uparrow \uparrow 18$ in place of $6$, we obtain $2 \uparrow \uparrow 2 \uparrow \uparrow 2 \uparrow \uparrow 25$ by exactly the same reasoning.

\end{proof}

\end{document}